\def\subsubsection{\@startsection{subsubsection}{3}%
  \z@{.5\linespacing\@plus.7\linespacing}{.1\linespacing}%
  {\normalfont\itshape}}
\newtheorem{thm}{Theorem}[section]
\newtheorem{lemma}[thm]{Lemma}
\newtheorem{conjecture}[thm]{Conjecture}
\newtheorem{proposition}[thm]{Proposition}
\newtheorem{problem}[thm]{Problem}
\newtheorem{corollary}[thm]{Corollary}
\newcommand{\ignore}[1]{}
\begin{document}

\author{Rebekah Herrman}
\address[Rebekah Herrman]{Department of Industrial and Systems Engineering, The University of Tennessee, Knoxville, TN}
\email[Rebekah Herrman]{rherrma2@tennessee.edu}

\author{Stephen G. Z. Smith}
\address[Stephen G. Z. Smith]{Huston-Tillotson University}
\email[Stephen G. Z. Smith]{sgsmith@htu.edu}

\title[Extending Grundy domination to $k$-Grundy domination] 
{Extending Grundy domination to $k$-Grundy domination}

\linespread{1.3}
\pagestyle{plain}

\begin{abstract}

The Grundy domination number of a graph $G = (V,E)$ is the length of the longest sequence of unique vertices $S = (v_1, \ldots, v_k)$ satisfying $N[v_i] \setminus \cup_{j=1}^{i-1}N[v_j] \neq \emptyset$ for each $i \in [k]$. Recently, a generalization of this concept called $k$-Grundy domination was introduced. In $k$-Grundy domination, a vertex $v$ can be included in $S$ if it has a neighbor $u$ such that $u$ appears in the closed neighborhood of fewer than $k$ vertices of $S$. In this paper, we determine the $k$-Grundy domination number for some families of graphs, find degree-based bounds for the $k$-$L$-Grundy domination number, and define a relationship between the $k$-$Z$-Grundy domination number and the $k$-forcing number of a graph.
\end{abstract}
\maketitle 
\textbf{Keywords:} Grundy domination; $k$-forcing

\textbf{AMS subject classification:} 05C69, 05C57
\section{Introduction}

 Grundy domination is a recently introduced variation on dominating sets of graphs \cite{brevsar2014dominating}. For a graph $G = (V,E)$, the goal of Grundy domination is to find a sequence of vertices of $G$ that is a \textit{closed neighborhood sequence}.  A closed neighborhood sequence is a sequence of vertices, $S = (v_1, \ldots, v_k)$, satisfying $N[v_i] \setminus \cup_{j=1}^{i-1}N[v_j] \neq \emptyset$ for each $i \in [k]$, where $N(v) = \{u : uv \in E(G)\}$, $N[v] = N(v) \cup \{v\}$, and $[k] = \{1, 2, ... , k\}$. The length of such a longest sequence is called the \textit{Grundy domination number}, denoted $\gamma_{gr}(G)$.  For $v \in S$, we say $v$ \textit{footprints} any $u \in N[v_i] \setminus \cup_{j=1}^{i-1} N[v_j]$. The unordered set of vertices $\{v_1, ... , v_k\}$ from the sequence $S$ is denoted $\widehat{S}$. 

 Recently, the related concepts of Grundy total ($t$-Grundy) domination, $Z$-Grundy domination, and $L$-Grundy domination have been introduced and studied extensively \cite{brevsar2016total, brevsar2017grundy, brevsar2020grundy, brevsar2021graphs, lin2019zero, nasini2020grundy, bell2021grundy,herrman2022length, herrman2022proof}. These variants are similar to Grundy domination except they require that the sequence $S$ is an \textit{open neighborhood sequence}, \textit{Z-sequence}, or \textit{L-sequence}, respectively. An open neighborhood sequence satisfies $N(v_i) \setminus \cup_{j=1}^{i-1}N(v_j) \neq \emptyset$, a $Z$-sequence satisfies $N(v_i) \setminus \cup_{j=1}^{i-1}N[v_j] \neq \emptyset$, and an $L$-sequence satisfies $N[v_i] \setminus \cup_{j=1}^{i-1}N(v_j) \neq \emptyset$. The Grundy total domination number of graph $G$, or $t$-Grundy domination number, is denoted $\gamma_{gr}^t(G)$, the $Z$-Grundy domination number of $G$ is denoted $\gamma_{gr}^Z(G)$, and the $L$-Grundy domination number of $G$ is denoted $\gamma_{gr}^L(G)$.

In \cite{brevsar2017grundy}, Bre{\v{s}}ar, Bujt{\'a}s, Gologranc, Klav{\v{z}}ar, Ko{\v{s}}mrlj, Patk{\'o}s, Tuza, and Vizer mention generalizing $Z$-Grundy domination to $k$-$Z$-Grundy domination. In this generalization, $G$ is a graph with minimum degree $\delta(G) \geq k$. A sequence $S = (v_1, v_2, ... , v_n)$ where $v_i \in V(G)$ is a $k$-$Z$-sequence if for each $i$ there exists $u_i$ such that $u_i \in N(v_i)$ and $u_i \in N[v_j]$ for fewer than $k$ vertices, $v_j$, in $(v_1, ... , v_{i-1})$. We shall require that no vertex $v_i \in V(G)$ appears in $S$ more than once, i.e., there are no repeated elements in $S$. The length of the longest $k$-$Z$-sequence of $G$ is called the $k$-$Z$-Grundy domination number, denoted $\gamma_{gr}^{Z,k}(G)$. The variants $k$-Grundy, $k$-$t$-Grundy, and $k$-$L$-Grundy domination are defined similarly: $S = (v_1, v_2, ... , v_n)$ is a $k$-sequence if for each $i$ there exists $u_i$ such that $u_i \in N[v_i]$ and $u_i \in N[v_j]$ for fewer than $k$ vertices, $v_j$, in $(v_1, ... , v_{i-1})$, it is a $k$-$L$-sequence if $u_i \in N[v_i]$ and $u_i \in N(v_j)$ for fewer than $k$ vertices, $v_j$, in $(v_1, ... , v_{i-1})$, and  it is a $k$-$t$-sequence if $u_i \in N(v_i)$ and $u_i \in N(v_j)$ for fewer than $k$ vertices, $v_j$, in $(v_1, ... , v_{i-1})$. The $k$-Grundy domination numbers for these variants are denoted $\gamma_{gr}^{k}(G)$, $\gamma_{gr}^{t,k}(G)$, and $\gamma_{gr}^{L,k}(G)$, respectively, and we call their corresponding sequences $k$-sequences, $k$-$t$-sequences, and $k$-$L$-sequences. We say that a vertex $v \in S$ \textit{$m$-$Z$-footprints} a vertex $u$ if $u$ appears in $m-1$ closed neighborhoods of vertices that appear in $S$ before $v$ and $u$ is in the open neighborhood of $v$. The concepts of $m$-footprint, $m$-$t$-footprint, and $m$-$L$-footprint are defined similarly, but with the corresponding open and closed neighborhoods in the definitions of $k$-Grundy, $k$-$t$-Grundy, and $k$-$L$-Grundy domination. When the context is clear, the $L$, $t$, or $Z$ may be omitted when referencing footprinting.

In this paper, we first develop degree-based bounds for $\gamma_{gr}^{L,k}(G)$ and explore inequalities between the different types of $k$-Grundy domination in Section~\ref{sec:degree}. Next, we discuss relationships between $\gamma_{gr}^{Z,k}(G)$ and the $k$-forcing number of $G$ in Section~\ref{sec:kforcing}. We then calculate $\gamma_{gr}^{k}(G)$, $\gamma_{gr}^{t,k}(G)$,  $\gamma_{gr}^{Z,k}(G)$, and $\gamma_{gr}^{L,k}(G)$ in Section~\ref{sec:grundy} for different families of graphs, which relies on results in the previous two sections.  Finally, we close with open problems in Section~\ref{sec:conclusion}.

\section{Degree based bounds}\label{sec:degree}
In this section, we examine bounds for the $k$-$L$-Grundy and $k$-$t$-Grundy domination numbers of graphs based on the minimum degree of the graph.

\begin{thm}\label{thm:degreeL}
Let $G$ be a graph with $n$ vertices and minimum degree $\delta$. Then $\gamma_{gr}^{L,k}(G) \leq n-\delta + k$.
\end{thm}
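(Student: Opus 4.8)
The plan is to look at the \emph{last} vertex $v_m$ of a longest $k$-$L$-sequence $S=(v_1,\dots,v_m)$ and use the footprinting condition together with the minimum-degree hypothesis to exhibit many neighbors of a suitable vertex that lie outside $\widehat S$. Concretely, set $T=V(G)\setminus\widehat S$, so that $|T|=n-m$ where $m=\gamma_{gr}^{L,k}(G)$; it then suffices to prove $|T|\ge\delta-k$, which rearranges to $m\le n-\delta+k$. Since $v_m$ belongs to $S$, there is a vertex $u_m\in N[v_m]$ that lies in $N(v_j)$ for fewer than $k$ indices $j\in[m-1]$; equivalently, at most $k-1$ of $v_1,\dots,v_{m-1}$ are neighbors of $u_m$.

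I would then split into the two cases permitted by $u_m\in N[v_m]$. If $u_m=v_m$, then $u_m$ has at least $\delta$ neighbors, of which at most $k-1$ lie in $\{v_1,\dots,v_{m-1}\}$ and none equals $u_m$; hence at least $\delta-(k-1)$ neighbors of $u_m$ avoid $\widehat S$ and so belong to $T$. If instead $u_m\in N(v_m)$, then among the at least $\delta$ neighbors of $u_m$, at most $k-1$ lie in $\{v_1,\dots,v_{m-1}\}$ and at most one further neighbor, namely $v_m$, lies in $\widehat S$, so at least $\delta-k$ neighbors of $u_m$ lie in $T$. In either case $|T|\ge\delta-k$, which finishes the proof. (The first case in fact yields the slightly stronger $m\le n-\delta+k-1$.)

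The order of steps is thus: (1) fix a longest $k$-$L$-sequence and name the complement $T$ of its vertex set; (2) translate the footprinting requirement on $v_m$ into a bound on the number of neighbors $u_m$ has among the earlier terms; (3) case on whether $u_m=v_m$ or $u_m\in N(v_m)$ and, in each case, bound the number of neighbors of $u_m$ inside $\widehat S$ by $k-1$ or $k$, respectively; (4) conclude $n-m=|T|\ge\delta-k$. I do not expect a serious obstacle: the only points requiring care are the bookkeeping in the case $u_m=v_m$ — which arises for $L$-type footprinting but not for the $Z$- or $t$-variants, since the footprinted vertex is drawn from the \emph{closed} neighborhood — namely making sure $u_m$ is not counted among its own neighbors, together with the observation that $v_m$ contributes at most one extra neighbor of $u_m$ inside $\widehat S$ when $u_m\ne v_m$. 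Trivial parameter ranges (such as $\delta<k$) make the inequality vacuous and need no separate treatment.
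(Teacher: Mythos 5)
Your proof is correct and follows essentially the same route as the paper's: both examine the last vertex $v_m$ of a longest $k$-$L$-sequence, use the footprinting condition to bound the number of neighbors of the footprinted vertex $u_m$ that lie in $\widehat S$ (treating the self-footprint case $u_m=v_m$ separately), and compare with the minimum degree to force at least $\delta-k$ neighbors into $T=V(G)\setminus\widehat S$. The only difference is presentational — you argue directly that $|T|\ge\delta-k$ while the paper assumes $\gamma_{gr}^{L,k}(G)\ge n-\delta+k+1$ and derives a contradiction — and your explicit two-case bookkeeping is, if anything, slightly cleaner.
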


Note that this is a generalization of a result in \cite{herrman2022length}, and can be proven in the same manner. We restate the proof here in generality for completeness.

\begin{proof}

Let $G$ be an $n$-vertex graph with minimum degree $\delta$. Suppose, for the sake of contradiction, that 
$\gamma_\mathrm{gr}^{L,k}(G)=m \geq n - \delta + k+1$. For convenience, define $t = n - \delta + k+1$ and let 
$S = (v_1, \ldots, v_m)$ be a maximal $k$-$L$-sequence of $G$. Define $T = V(G) \setminus S$. It follows that $|T|= m-n \leq n-t = n-(n - \delta + k+1) = \delta - k-1$.

Since $v_m$ has degree at least $\delta$, and 
there are at most $\delta - k-1$ vertices in $T$, $v_m$ must be adjacent to at least $k+1$ 
vertices in $\widehat{S}$. Thus, $v_m$ can only be in $S$ if there exists $v \in N(v_m)$ that appears in the open neighborhoods of at most $k-1$ vertices in $\widehat{S} \setminus v_m$. If $v$ has at most $k-1$ neighbors in $S$ and is in the open neighborhood of $v_m$, it has at least $\delta - k$ neighbors in $T$, which is not possible.

Therefore, $\gamma_\mathrm{gr}^{L,k}(G) \leq n - \delta + k$.
\end{proof}

The upper bound is tight: for example, $\gamma_{gr}^{L,k} (K_n)=n-(n-1)+k = k+1$, which is proven in the next section. Another example of tightness can be seen in the following graph: Let $T$ and $T'$ both be complete binary trees of the same height, $h \geq 3$, and connect the leaves of $T$ to the leaves of $T'$ in a cycle. We say the roots of each tree are located at level $m=1$ in their respective trees, their neighbors are located at level $m=2$, etc., with the leaves being located at height $m=h$.

This graph has $\delta = 2$, and satisfies $\gamma_\mathrm{gr}^{L,2}(G)= n$. To see this, first add the leaves of each tree into $S$. These vertices can be added into $S$ in any order, since each leaf has a neighbor that has appeared in the open neighborhood of at most one element of $S$ since $h \geq 3$. Then add the neighbors of the leaves. These can be added since their neighbor in $T$ at level $h-2$ has not appeared in an open neighborhood. This process continues until we have added the vertices at level $m=3$ to $S$. At this point, we add the root of each tree to $S$, which can be added since the roots $L$-footprint themselves and they have not appeared in the open neighborhood of any element of $S$. Finally, we add the neighbors of the roots to $S$. These neighbors can be added to $S$ since the root of each tree does not appear in the open neighborhood of any vertex of $S$ (even though they appear in their own closed neighborhoods). An example of this graph is found in Figure~\ref{fig:tightexample}.

\begin{figure}
 \centering
 \begin{tikzpicture}[scale=1]
\begin{scope}[every node/.style={scale=.75,circle,draw}]
    \node[fill=green] (A) at (0,5) {};
    \node (B) at (-1.75,3) {};
	\node (C) at (1.75,3) {}; 
	\node[fill=red] (D) at (-2.25,1) {};
	\node[fill=red] (E) at (-1.25,1) {};
    \node[fill=red] (F) at (2.25,1) {};
	\node[fill=red] (G) at (1.25,1) {}; 
	\node[fill=red] (H) at (1.25,-1) {};
	\node[fill=red] (I) at (2.25,-1) {};
    \node[fill=red] (J) at (-1.25,-1) {};
	\node[fill=red] (K) at (-2.25,-1) {}; 
	\node (L) at (1.75,-3) {};
	\node (M) at (-1.75,-3) {}; 
	\node[fill=green] (N) at (0,-5) {};

\end{scope}

\draw  (A) -- (B);
\draw  (A) -- (C);
\draw  (B) -- (D);
\draw  (B) -- (E);
\draw  (C) -- (F);
\draw  (C) -- (G);

\draw  (D) -- (K);
\draw  (D) -- (J);
\draw  (E) -- (J);
\draw  (E) -- (H);
\draw  (F) -- (I);
\draw  (G) -- (I);
\draw  (G) -- (H);
\draw  (F) -- (K);

\draw  (L) -- (H);
\draw  (L) -- (I);
\draw  (M) -- (J);
\draw  (M) -- (K);
\draw  (N) -- (L);
\draw  (N) -- (M);

\end{tikzpicture}
\caption{An example of a graph for which equality in Theorem~\ref{thm:degreeL} holds. All red-filled vertices are added to $S$ first in any order, followed by the green-filled vertices, then the unfilled ones.}
\label{fig:tightexample}
\end{figure}
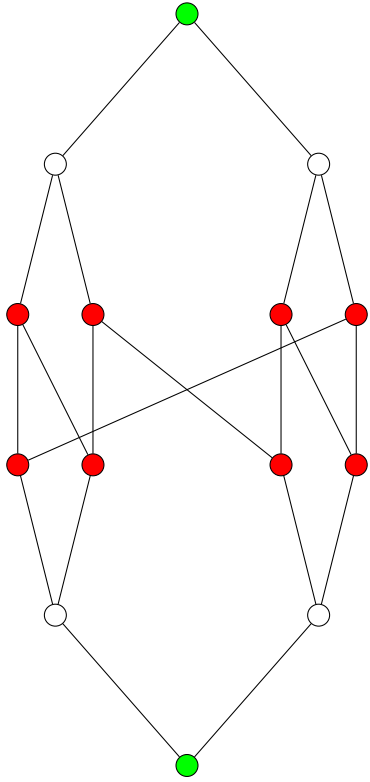

Since $\delta(G) \geq k$ is a condition for $k$-$Z$-Grundy domination, Theorem~\ref{thm:degreeL} implies that in order for $\gamma_{gr}^{L,k}(G) = n$, $\delta = k$. This condition is necessary but may not be sufficient. 

The following Proposition gives a comparison between $\gamma_{gr}^{k}(G)$ and $\gamma_{gr}^j(G)$ for $k \neq j$.

\begin{proposition}
For a graph $G$, $\gamma_{gr}^{k}(G) \leq \gamma_{gr}^{j}(G)$ and $\gamma_{gr}^{a,k}(G) \leq \gamma_{gr}^{a,j}(G)$ for $a \in \{L,Z,t\}$ and $k < j$.
\end{proposition}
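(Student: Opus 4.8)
The plan is to prove the stronger structural fact that, when $k < j$, \emph{every} $k$-sequence of $G$ is also a $j$-sequence (and likewise for the $L$-, $Z$-, and $t$-variants); the displayed inequalities then follow at once by applying this to a longest $k$-sequence.

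Concretely, I would let $S = (v_1, \ldots, v_m)$ be a longest $k$-sequence of $G$, so $m = \gamma_{gr}^{k}(G)$. For each $i \in [m]$ the definition of a $k$-sequence supplies a vertex $u_i$ with $u_i \in N[v_i]$ that lies in the closed neighborhood of \emph{fewer than} $k$ of the vertices $v_1, \ldots, v_{i-1}$, i.e.\ of at most $k-1$ of them. Since $k$ and $j$ are integers with $k < j$, we have $k-1 \le j-1$, so $u_i$ lies in the closed neighborhood of fewer than $j$ of the preceding vertices. Hence the same sequence $S$, witnessed by the same vertices $u_1, \ldots, u_m$, satisfies the definition of a $j$-sequence, and therefore $\gamma_{gr}^{j}(G) \ge m = \gamma_{gr}^{k}(G)$. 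Replacing the pair $(N[v_i], N[v_j])$ in this argument by $(N[v_i], N(v_j))$, $(N(v_i), N[v_j])$, or $(N(v_i), N(v_j))$ gives verbatim the inequalities $\gamma_{gr}^{L,k}(G) \le \gamma_{gr}^{L,j}(G)$, $\gamma_{gr}^{Z,k}(G) \le \gamma_{gr}^{Z,j}(G)$, and $\gamma_{gr}^{t,k}(G) \le \gamma_{gr}^{t,j}(G)$.

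There is essentially no combinatorial obstacle here: the monotonicity is built into the ``fewer than $k$'' clause, so the argument is a one-line comparison of thresholds. The only point that needs to be kept in mind is that both invariants must be well defined. For the $Z$- and $t$-variants this requires $\delta(G) \ge j$ (the excerpt imposes $\delta(G) \ge k$ for $\gamma_{gr}^{Z,k}$, and the analogous hypothesis is needed for the $t$-variant and for the larger parameter $j$); whenever $\gamma_{gr}^{a,j}(G)$ is referenced in the statement I would read this condition as being in force, and under it the argument above goes through unchanged.
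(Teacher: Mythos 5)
Your argument is correct and coincides with the paper's (very brief) justification: every $k$-sequence is automatically a $j$-sequence because a witness lying in fewer than $k$ relevant neighborhoods also lies in fewer than $j$ of them when $k < j$, applied verbatim to each of the four variants. Your added remark about the minimum-degree hypothesis ensuring both parameters are defined is a sensible clarification but not a departure from the paper's approach.
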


This clearly must hold since if $v$ is not in the neighborhood of $k$ vertices in $S$, it cannot be in the neighborhood of $j$ vertices of $S$ when $k < j$. Thus, if the $k$-Grundy domination number of a graph can be determined, it is a lower bound for the $j$-Grundy domination number of the same graph, where $k < j$.

Furthermore, we can compare $\gamma_{gr}^{k}(G)$, $\gamma_{gr}^{L,k}(G)$, $\gamma_{gr}^{Z,k}(G)$, and $\gamma_{gr}^{t,k}(G)$.

\begin{proposition}\label{prop:comparison}
$\gamma_{gr}^{Z,k}(G) \leq \gamma_{gr}^k(G) \leq \gamma_{gr}^{L,k}(G)-1$ and $\gamma_{gr}^{Z,k}(G) \leq \gamma_{gr}^{t,k}(G) \leq \gamma_{gr}^{L,k}(G)$.
\end{proposition}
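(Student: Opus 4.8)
The plan is to establish each of the four inequalities by a direct comparison of the defining conditions for the various $k$-sequences, in most cases by showing that a sequence valid for one variant is automatically valid for another. The key observation throughout is the containment $N(v) \subseteq N[v]$, together with a careful reading of which neighborhood (open or closed) appears on the ``candidate'' vertex $u_i$ and which appears on the earlier vertices $v_j$.

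\emph{Step 1: $\gamma_{gr}^{Z,k}(G) \leq \gamma_{gr}^k(G)$ and $\gamma_{gr}^{Z,k}(G) \leq \gamma_{gr}^{t,k}(G)$.} I would take a maximum $k$-$Z$-sequence $S = (v_1,\ldots,v_m)$ and show it is also a $k$-sequence (resp.\ a $k$-$t$-sequence). For each $i$ there is a witness $u_i \in N(v_i)$ lying in $N[v_j]$ for fewer than $k$ indices $j < i$. Since $N(v_i) \subseteq N[v_i]$, the same $u_i$ works as a witness for the $k$-sequence condition, so $S$ is a $k$-sequence. For the $k$-$t$ comparison I would instead note that the $k$-$Z$ condition counts membership in \emph{closed} neighborhoods $N[v_j]$, while the $k$-$t$ condition counts membership in \emph{open} neighborhoods $N(v_j)$; since $N(v_j) \subseteq N[v_j]$, the count for the $k$-$t$ condition is no larger, so again $u_i$ still witnesses membership in fewer than $k$ of the relevant neighborhoods, and $S$ is a $k$-$t$-sequence.

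\emph{Step 2: $\gamma_{gr}^{t,k}(G) \leq \gamma_{gr}^{L,k}(G)$.} Here I would take a maximum $k$-$t$-sequence and observe that the $k$-$L$ condition asks for $u_i \in N[v_i]$ (weaker than $u_i \in N(v_i)$) lying in fewer than $k$ open neighborhoods $N(v_j)$ (the same count as in the $k$-$t$ condition). Thus every $k$-$t$-sequence is verbatim a $k$-$L$-sequence, giving the inequality.

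\emph{Step 3: $\gamma_{gr}^k(G) \leq \gamma_{gr}^{L,k}(G) - 1$.} This is the one inequality that is not a pure ``same sequence works'' argument, because of the $-1$, and I expect it to be the main obstacle. The comparison of conditions alone gives $\gamma_{gr}^k(G) \leq \gamma_{gr}^{L,k}(G)$: a $k$-sequence has witnesses $u_i \in N[v_i]$ in fewer than $k$ \emph{closed} neighborhoods $N[v_j]$, and since $N(v_j) \subseteq N[v_j]$ the count of \emph{open} neighborhoods containing $u_i$ is no larger, so the $k$-$L$ condition holds. To get the extra $+1$, I would argue that a maximum $k$-$L$-sequence can always be extended or that a maximum $k$-sequence is never a maximum $k$-$L$-sequence: given a maximal $k$-sequence $S = (v_1,\ldots,v_m)$, I would try to append one more vertex $v_{m+1}$ using the fact that the $k$-$L$ condition treats a vertex's own closed neighborhood more leniently—specifically, a vertex $v$ with $v \in N[v]$ counted only against open neighborhoods $N(v_j)$ can still be footprinted by itself even when every vertex (including $v$) has already been ``used up'' under the stricter closed-neighborhood count. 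The cleanest route is probably: pick any vertex $w \notin \widehat{S}$ (if $S$ already exhausts $V(G)$, handle that case separately by noting $w = v$ itself can re-serve as a witness $u_i = v_i$ since membership of $v_i$ in $N(v_j)$ for $j<i$ is strictly fewer constraints), and check that $w$ can be appended because $w \in N[w]$ and $w$ appears in fewer than $k$ open neighborhoods among $\widehat{S}$—the latter needing a counting argument against maximality of $S$ as a $k$-sequence. I would verify the boundary behavior on small cases (e.g.\ $K_n$, where $\gamma_{gr}^k = k$ and $\gamma_{gr}^{L,k} = k+1$ as asserted after Theorem~\ref{thm:degreeL}) to make sure the off-by-one is stated in the right direction before committing to the general argument.
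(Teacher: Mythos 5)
Your Steps 1 and 2 are fine: they are precisely the neighborhood-containment observations ($N(v)\subseteq N[v]$ applied to the witness and to the earlier vertices separately) that the paper invokes by pointing to the $k=1$ argument of Bre\v{s}ar et al., and they give $\gamma_{gr}^{Z,k}\le\gamma_{gr}^{k}$, $\gamma_{gr}^{Z,k}\le\gamma_{gr}^{t,k}$, $\gamma_{gr}^{t,k}\le\gamma_{gr}^{L,k}$, and also $\gamma_{gr}^{k}\le\gamma_{gr}^{L,k}$.

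The gap is in Step 3, and the route you sketch cannot be repaired as stated. If $S=(v_1,\dots,v_m)$ is a maximum (hence non-extendable) $k$-sequence and $w\notin\widehat{S}$, then the impossibility of appending $w$, read with the witness $u=w\in N[w]$, says exactly that $w$ lies in at least $k$ closed neighborhoods $N[v_j]$; since $w\neq v_j$ for all $j$, these are all open-neighborhood memberships as well. So \emph{every} vertex outside $\widehat{S}$ lies in at least $k$ open neighborhoods of $\widehat{S}$, and the vertex you want to append with itself as witness never exists: maximality works against your counting argument, not for it. (Your fallback for $\widehat{S}=V(G)$ also fails, since repetitions are forbidden; that case in fact cannot occur when $\delta\ge k$, but that needs its own argument.) A further warning sign is that $\gamma_{gr}^{k}(G)\le\gamma_{gr}^{L,k}(G)-1$ is simply false without the standing hypothesis $\delta(G)\ge k$ (for $K_2$ with $k=2$ both parameters equal $2$), so any correct proof must use the degree bound, which your sketch never does. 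The actual slack sits on vertices \emph{inside} $\widehat{S}$, where the open count is one less than the closed count. Concretely: let $u$ be a witness for the last vertex $v_m$, so $u\in N[v_m]$ and $u$ lies in at most $k-1$ closed neighborhoods of $v_1,\dots,v_{m-1}$. If $u\notin\widehat{S}$, then $(v_1,\dots,v_{m-1},u,v_m)$ is a $k$-$L$-sequence of length $m+1$: $u$ $L$-footprints itself, and $v_m$ takes $u$ as its $L$-witness because $u\notin N(u)$. If $u\in\widehat{S}$, then $u$ lies in at most $k$ closed neighborhoods of all of $\widehat{S}$, hence in at most $k-1$ open ones; since $|N[u]|\ge\delta+1\ge k+1$ while at most $k$ vertices of $\widehat{S}$ have $u$ in their closed neighborhood, there is some $x\in N(u)\setminus\widehat{S}$, and appending $x$ with witness $u$ gives a $k$-$L$-sequence of length $m+1$. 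That case analysis (the generalization of the cited $k=1$ argument) is the missing content of Step 3.
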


The proof of this result is the same as the proof of the analogous result for $k=1$ in \cite{brevsar2017grundy}. Furthermore, these inequalities are tight. To see this, $\gamma_{gr}^{t,k}(K_n) = \gamma_{gr}^{L,k}(K_n)$ for all $n \geq k+1$ and $\gamma_{gr}^k(K_n) = \gamma_{gr}^{L,k}(K_n)-1$ for the same family of graphs. Additionally, $\gamma_{gr}^{Z,k}(K_{m,n}) = \gamma_{gr}^{t,k}(K_{m,n})$ for $m> k, n \geq k$ and $\gamma_{gr}^{Z,k}(K_{m,n}) = \gamma_{gr}^k(K_{m,n})$ when $m = n = k$. These results are proven in the next section.

The degree-based upper bound for the other types of Grundy domination immediately follows from Theorem \ref{thm:degreeL} and Proposition~\ref{prop:comparison}.

\begin{corollary}\label{cor:degreebound}
$\gamma_\mathrm{gr}^{t,k}(G) \leq n - \delta + k$ and $\gamma_\mathrm{gr}^{Z,k}(G) \leq \gamma_\mathrm{gr}^{k}(G) \leq n - \delta + k-1$.
\end{corollary}

\section{Relationship to $k$-forcing}\label{sec:kforcing}

The $k$-forcing number of a graph is a generalization of the zero forcing number. In zero forcing, a collection of vertices are colored blue, and the rest are white. White vertices can be changed blue at discrete time steps according to the \textit{color change rule}. The color change rule states that if a blue vertex, $b$, has exactly one neighbor that is white, $w$, then $w$ is turned blue, and we say that $b$ \textit{forces} $w$. The minimum number of blue vertices needed to ensure the entire graph is colored blue eventually is called the \textit{zero forcing number} of $G$, denoted $Z(G)$. Zero forcing can be used to determine the minimum rank and maximum nullity of graphs \cite{AIMMINIMUMRANKSPECIALGRAPHSWORKGROUP20081628, barioli2010zero, huang2010minimum, edholm2012vertex}, and is closely related to power domination \cite{benson2018zero, bozeman2019restricted}.

The generalization of zero forcing is called $k$-forcing. In $k$-forcing, a subset of vertices is colored blue while the rest are white. The color change rule here differs slightly from zero forcing in that if a blue vertex has at most $k$ white neighbors, all white neighbors are colored blue at the next time step. The $k$-forcing number of a graph $G$ is denoted $F_k(G)$ and is the size of the smallest $k$-forcing set of the graph. Upper bounds of $k$-forcing numbers of graphs and its relationship to $k$-power domination has been studied in recent years \cite{caro2014dynamic, amos2015upper, ferrero2018relationship}. In \cite{brevsar2017grundy}, Bre{\v{s}}ar et al. prove that $\gamma_{gr}^Z(G) + Z(G) = |V(G)|$, however the relationship between $k$-$Z$-Grundy domination and $k$-forcing cannot be established as easily. In fact,

\begin{thm}\label{prop:relationtokforcing}
Let $G = (V,E)$ with $|V(G)| = n$. Then,
$\gamma_{gr}^{Z,k}(G) \geq n-F_k(G)$.
\end{thm}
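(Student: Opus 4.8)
The claim is $\gamma_{gr}^{Z,k}(G) \geq n - F_k(G)$. The natural strategy mirrors the $k=1$ argument of Bre\v{s}ar et al., where the zero forcing process run "in reverse" produces a $Z$-sequence. So the plan is: take an optimal $k$-forcing set $B$ with $|B| = F_k(G)$, run the $k$-forcing process to completion, record the order in which white vertices get forced, and reverse that order to build a $k$-$Z$-sequence on the $n - F_k(G)$ vertices of $V(G) \setminus B$.

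**Key steps in order.** First I would fix a $k$-forcing set $B$ and a chronological list of forces: at each time step a blue vertex $b$ with at most $k$ white neighbors forces all of them blue. Linearize this into a sequence $(w_1, w_2, \ldots, w_m)$, $m = n - F_k(G)$, listing the non-$B$ vertices in the order they turn blue (breaking ties within a time step arbitrarily, but keeping all vertices forced by one application of the rule consecutive, and recording for each $w_i$ the vertex $f(w_i) = b$ that forced it). Second, I would define the candidate $k$-$Z$-sequence to be the reversal $S = (w_m, w_{m-1}, \ldots, w_1)$. Third, the verification: fix $w_i$; I need a vertex $u_i \in N(w_i)$ that lies in $N[v]$ for fewer than $k$ of the vertices $v$ appearing before $w_i$ in $S$ — i.e., for fewer than $k$ of the $w_j$ with $j > i$. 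The claim is that $u_i := f(w_i)$ works. When $b = f(w_i)$ applied the color-change rule, its white neighbors were a set $W$ with $w_i \in W$ and $|W| \leq k$; every vertex of $W$ was white at that moment, meaning every vertex of $W$ other than those forced simultaneously with $w_i$ was forced strictly later, hence appears strictly before $w_i$ in the reversed sequence... wait, that's backwards. Let me reframe: a neighbor $w_j$ of $b$ that is white when $b$ fires gets forced at that step or is in $W$; so among neighbors of $b = u_i$, the ones that were white at firing time number at most $k$, and these are exactly the ones forced at that step. The vertices $w_j$ with $j > i$ that have $u_i$ in their closed neighborhood: either $w_j$ is itself forced strictly after $w_i$ (so white when $b$ fires — but then $w_j$ is not yet forced, contradiction with being forced after but before... ) — the cleanest phrasing is that once $b$ fires, any neighbor of $b$ forced afterward must have already been blue, so among $\{w_j : j > i\}$, the only ones adjacent to $u_i$ or equal to $u_i$ are those forced in the same step as $w_i$ (at most $k-1$ besides $w_i$) plus possibly $u_i = b$ itself if $b \notin B$ and $b$ is forced after $w_i$ — but $b$ must be blue to fire, so if $b \notin B$ then $b = w_\ell$ with $\ell < i$, appearing after $w_i$ in $S$... no: $\ell < i$ means $w_\ell$ comes after $w_i$ in the reversal $S$. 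Hmm, I need $b$'s position relative to $w_i$ in $S$ to be consistent; since $b$ fires at or before the step forcing $w_i$, we have $\ell \leq i$ (with $\ell$ meaning $b$'s index in the forcing order), so in $S$ vertex $b$ appears at position $m - \ell + 1 \geq m - i + 1$, i.e., at or before $w_i$'s position in $S$ — good, so $b = u_i$ does not count among the "earlier in $S$" vertices. Thus the count of bad vertices is at most $k - 1$, which is $< k$, as required. Finally, the sequence has no repeats by construction, and its length is $m = n - F_k(G)$, so $\gamma_{gr}^{Z,k}(G) \geq n - F_k(G)$.

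**Main obstacle.** The genuine subtlety — and the reason the equality of the $k=1$ case degrades to an inequality — is precisely the bookkeeping in the verification step: a single application of the $k$-color-change rule turns up to $k$ vertices blue simultaneously, so when I reverse the order I must argue that all of $u_i = f(w_i)$'s neighbors forced at that same step contribute to the "fewer than $k$" budget but nothing forced later does. I would be careful to state that vertices forced at one rule-application are placed consecutively in the forcing order, count exactly how many neighbors of $b$ are white at firing time (at most $k$, including $w_i$ itself), and confirm that $b$, being necessarily blue to fire, never lands strictly before $w_i$ in the reversed sequence. I also expect a minor edge case if some $w_j$ forced simultaneously with $w_i$ is not a neighbor of $b$ — but by definition of the rule all simultaneously-forced vertices are white neighbors of the firing vertex $b$, so this cannot happen and the bound of $k-1$ is clean.
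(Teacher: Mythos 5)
Your argument is correct and is essentially the paper's own proof: reverse the chronological $k$-forcing order of the non-$B$ vertices, take the forcing vertex $b=f(w_i)$ as the witness $u_i$, observe that no neighbor of $b$ is forced after $b$ fires (so only the at most $k-1$ vertices forced simultaneously with $w_i$ can precede $w_i$ in the reversed sequence and contain $b$ in their closed neighborhood), and note that $b$ itself, being blue when it fires, lies in $B$ or appears after $w_i$ in the reversed sequence. The only blemish is a wording slip where you say $b$ appears ``at or before'' $w_i$'s position in $S$ when your inequality $m-\ell+1 \geq m-i+1$ shows it appears at or after, which is exactly what your conclusion needs.
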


\begin{proof}
The proof of this result for general $k$ is analogous to the proof when $k=1$, which was originally proven in \cite{brevsar2017grundy}, however there are some details that must be slightly modified. We write the proof here for general $k$ for completeness. 

Without loss of generality, let $G$ be a connected graph and let $B$ be a $k$-forcing set. Let $m = n - |B|$. Consider the sequence $\{b_1, ... , b_t\}$ where each $b_i$ is a blue vertex that colors at least one white vertex at step $i$ of the color change process. Let $\{w_{i}\}$ be the at most $k$ white neighbors of $b_i$ at the moment it is chosen. Suppose there are $t$ steps of the color change process. 
We will show that $(\{w_{t}\}, ... , \{w_{1}\})$ is a valid Z-sequence, where vertices in each $\{w_{i}\}$ can be selected in any order as long as all vertices of $\{w_{i}\}$ are selected before any vertex of $\{w_{i-1}\}$ is.

To see this, $b_i \notin\{w_{i}\}$ and it is contained in the open neighborhoods of all vertices in $\{w_i\}$ where $|\{w_i\}| \leq k$. Let $d< t$. Then $N(b_d) \subset V(G) \setminus (\{w_{d+1}\} \cup \ldots \cup \{w_t\})$ otherwise 
$b_d$ cannot force any vertices because it now has more than $k$ white neighbors, or if $v \in N(b_d) \cap \{w_c\}$ for some $c \geq d+1$ and $|N(b_d)| < k$, then $v$ would be placed in $\{w_d\}$ initially and not $\{w_c\}$.  
Thus each vertex in $\{w_{d}\}$ can be placed in $S$ because $b_d$ will have been in the open neighborhood of at most $k$ vertices of $S$. 
If $B$ is a minimal $k$-forcing set, then $ F_k(G)=|B| = n-m $ and $m \leq \gamma_{gr}^{Z,k}(G)$, so $\gamma_{gr}^{Z,k}(G) \geq n-F_k(G)$.

\end{proof}

We believe the following conjecture is true.
\begin{conjecture}
Let $G = (V,E)$ with $|V(G)| = n$. Then,
$\gamma_{gr}^{Z,k}(G) = n-F_k(G)$.
\end{conjecture}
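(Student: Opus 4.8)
The conjecture claims $\gamma_{gr}^{Z,k}(G) = n - F_k(G)$. We already have $\geq$ from Theorem~\ref{prop:relationtokforcing}, so the plan is to prove the reverse inequality $\gamma_{gr}^{Z,k}(G) \leq n - F_k(G)$, equivalently: given a longest $k$-$Z$-sequence $S = (v_1, \ldots, v_m)$ with $m = \gamma_{gr}^{Z,k}(G)$, the complement $B = V(G) \setminus \widehat{S}$ is a $k$-forcing set, which would give $F_k(G) \leq |B| = n - m$.

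First I would recall how the $k=1$ argument in \cite{brevsar2017grundy} goes: one colors $B = V \setminus \widehat S$ blue and shows that processing the sequence in reverse order $v_m, v_{m-1}, \ldots, v_1$ lets each $v_i$ be forced. The key is that for each $i$ there is a witness $u_i \in N(v_i)$ lying in $N[v_j]$ for fewer than $k$ indices $j < i$; when we come to color $v_i$ (having already colored $v_{i+1}, \ldots, v_m$ plus all of $B$), the still-white neighbors of $u_i$ are among $\{v_1, \ldots, v_i\} \cap N[u_i]$, of which there are at most $k$ (the fewer-than-$k$ earlier ones, plus $v_i$ itself if $u_i = v_i$'s... actually $u_i \in N(v_i)$ so $v_i \in N(u_i)$, giving at most $(k-1) + 1 = k$ white neighbors), so $u_i$ can force, and in particular $v_i$ becomes blue. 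I would need to check carefully that $u_i$ is already blue at that stage — this holds because $u_i \in N[v_j]$ for fewer than $k$ values of $j < i$, so in particular $u_i \notin \{v_{i+1}, \ldots, v_m\}$ is \emph{not} guaranteed; rather, if $u_i = v_j$ for some $j > i$ then $u_i$ was colored in an earlier (reverse-order) step, and if $u_i \in B$ it started blue, and if $u_i = v_j$ for $j \le i$ then we're coloring it now or it's white — this last case needs the most care and is essentially why the witness $u_i$ should be chosen so that, among $\{v_1, \dots, v_i\}$, only controlled membership occurs.

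The main obstacle — and the reason the authors state this only as a conjecture — is the interaction between the "fewer than $k$" slack and the forcing rule's "at most $k$ white neighbors" threshold. When $k = 1$ the witness $u_i$ has \emph{zero} earlier closed-neighborhood appearances, so at the reverse step its only white neighbor is $v_i$, and everything is clean. For $k \ge 2$, the at-most-$k$ white neighbors of $u_i$ at step $i$ could include vertices $v_\ell$ with $\ell < i$ that still haven't been colored (they get colored at later reverse steps), and it is not clear those white neighbors are \emph{exactly} the ones we want forced, nor that forcing them does not prematurely consume the budget for some other $u_j$. I expect the crux is to show one can always choose the witnesses $\{u_i\}$ coherently across the whole sequence so that the reverse-order forcing process is consistent; a natural attempt is to choose, for each $i$, the witness $u_i$ minimizing its number of closed-neighborhood appearances among $v_1, \ldots, v_{i-1}$, or to argue by induction on $m$ contracting the last vertex. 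If a clean choice cannot be made, the fallback is to look for a counterexample among small graphs or a structured family (e.g. products, or the graphs used to show tightness elsewhere in the paper), which is presumably what motivated leaving this open.

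Given the evidence already assembled in the paper — equality holds for all the explicitly computed families (complete graphs, complete bipartite graphs, paths/cycles once $\delta \ge k$) — I would first verify the conjecture on those families directly as a sanity check, then attempt the induction-on-$|V(G)|$ route: take a longest $k$-$Z$-sequence, let $v$ be its first vertex, and try to relate $\gamma_{gr}^{Z,k}$ and $F_k$ of $G$ to those of $G - v$ or of $G$ with $v$'s witness edge suitably modified, pushing the $k = 1$ proof of \cite{brevsar2017grundy} through the recursion. The hard part, again, is that unlike $\gamma_{gr}^Z$, the quantity $\gamma_{gr}^{Z,k}$ does not behave as simply under vertex deletion, so this recursion may not close — which is exactly why we record it as a conjecture rather than a theorem.
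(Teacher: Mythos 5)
You have not produced a proof here, and neither does the paper: the statement is posed there as a conjecture, with only the inequality $\gamma_{gr}^{Z,k}(G) \geq n - F_k(G)$ established (Theorem~\ref{prop:relationtokforcing}). Immediately after stating the conjecture, the paper explains why the $k=1$ argument of \cite{brevsar2017grundy} — in which the footprinted vertices $a_1,\dots,a_m$ force the sequence vertices $u_1,\dots,u_m$ — does not extend: for $k\geq 2$ it is not clear that the vertex doing the forcing is blue when it is needed. Your proposal reduces the problem to exactly the same missing direction (showing that $B = V(G)\setminus\widehat{S}$ is a $k$-forcing set when $S$ is a longest $k$-$Z$-sequence), and your diagnosis of the obstruction — that for $k\geq 2$ the witness $u_i$ may itself be an earlier, still-white sequence vertex at the reverse step, and that its at-most-$k$ white neighbors need not be the vertices you want forced, so the witnesses would have to be chosen coherently across the whole sequence — is essentially the paper's own reason for leaving the statement open.

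So there is no error of reasoning in what you wrote, but there is also no proof: the proposed remedies (a minimizing choice of witnesses, or induction on $n$ via vertex deletion) are left unexecuted, and you correctly concede they may not close. Relative to the paper you are in the same position; the inequality $\gamma_{gr}^{Z,k}(G) \leq n - F_k(G)$ remains open, and any genuine progress would have to either carry out the coherent-witness selection or exhibit a counterexample, neither of which your write-up (or the paper) supplies.
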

\noindent The argument used in \cite{brevsar2017grundy} is not sufficient to prove equality. In \cite{brevsar2017grundy}, the authors define a sequence of vertices $A=\{a_1, ... , a_m\}$, where $a_i$ is footprinted by $u_i \in S$. They then show that $a_m$ must be blue, and $a_i$ forces $u_i$ for all $1 \leq i \leq m$. When attempting this same argument for $k$-forcing, it is not clear that $u_m$ has a blue neighbor.





\section{The k-Grundy domination number for different classes of graphs}\label{sec:grundy}

In order to gain intuition as to how $k$-Grundy, $k$-$L$-Grundy, $k$-$Z$-Grundy, and $k$-$t$-Grundy domination numbers are related, we first calculate the aforementioned quantities for specific families of graphs. 






\subsection{Cycles}\label{subsec:cycle}
A cycle on $n$ vertices, $C_n$, is a connected graph in which all vertices have degree two. Since $\gamma_{gr}(C_n)$ and $\gamma_{gr}^{t}(C_n)$ have been found previously \cite{brevsar2016total}, we will determine the different $2$-Grundy domination numbers for $C_n$. 
\begin{thm}

\begin{enumerate}
\renewcommand\labelenumi{\normalfont(\arabic{enumi})}
\item $\gamma_{gr}^{2}(C_n)= n-1$
\item $\gamma_{gr}^{L,2}(C_n)= n$
\item $\gamma_{gr}^{Z,2}(C_n)= n-1$
\item $\gamma_{gr}^{t,2}(C_n)= n$.
\end{enumerate}

\end{thm}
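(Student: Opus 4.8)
The strategy is to prove each of the four equalities by establishing matching upper and lower bounds. For the cycle $C_n$, every vertex has degree $2$, so $N[v]$ has size $3$ and $N(v)$ has size $2$. The upper bounds come essentially for free from the results already proven: Corollary~\ref{cor:degreebound} with $\delta = 2$, $k=2$ gives $\gamma_{gr}^{t,2}(C_n) \leq n - 2 + 2 = n$ and $\gamma_{gr}^{2}(C_n) \leq \gamma_{gr}^{Z,2}(C_n)$ is bounded above by $n - 2 + 2 - 1 = n - 1$ (and likewise $\gamma_{gr}^{Z,2}(C_n) \le \gamma_{gr}^2(C_n) \le n-1$ by Proposition~\ref{prop:comparison} and Corollary~\ref{cor:degreebound}), while Theorem~\ref{thm:degreeL} gives $\gamma_{gr}^{L,2}(C_n) \leq n$. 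So the real content is exhibiting long enough sequences for the lower bounds, plus one extra argument showing $\gamma_{gr}^{2}(C_n) = \gamma_{gr}^{Z,2}(C_n) \neq n$.

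\textbf{Lower bounds via explicit sequences.} Label the vertices $0, 1, \ldots, n-1$ cyclically. For parts (2) and (4) I would write down an ordering of \emph{all} $n$ vertices and check the defining condition at each step. A natural candidate is to traverse the cycle in order $v_1 = 0, v_2 = 1, \ldots, v_n = n-1$: when $v_i = i-1$ is added, its ``forward'' neighbor $i$ has so far appeared in the open neighborhood of only $v_{i-1} = i-2$ (once, which is fewer than $k=2$), so $v_i$ can $2$-$t$-footprint (hence also $2$-$L$-footprint) the vertex $i$; the very first vertex footprints either neighbor freely. This handles (4), and since a $k$-$t$-sequence is a $k$-$L$-sequence, it also gives $\gamma_{gr}^{L,2}(C_n) \geq n$, settling (2). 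For parts (1) and (3) one instead needs a sequence of length $n-1$ that is simultaneously a $2$-sequence and a $2$-$Z$-sequence; by Proposition~\ref{prop:comparison} it suffices to produce a $2$-$Z$-sequence of length $n-1$ (giving $\gamma_{gr}^{Z,2} \geq n-1$) and to note $\gamma_{gr}^{2} \geq \gamma_{gr}^{Z,2}$. Taking the first $n-1$ vertices $0, 1, \ldots, n-2$ in order works: each $v_i = i-1$ has the neighbor $i$ appearing in \emph{closed} neighborhoods of at most one earlier vertex, namely $v_{i-1}=i-2$, so it $2$-$Z$-footprints $i$; we stop before $n-1$ precisely because adding it would require a neighbor seen in fewer than $k=2$ closed neighborhoods, and both its neighbors $n-2$ and $0$ would already be over the threshold. (I would double-check small cases $n = 3, 4$ separately, since wrap-around effects are most delicate there and $\delta \ge k$ forces $n \ge 3$.)

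\textbf{The one real obstacle: showing $\gamma_{gr}^{2}(C_n) = \gamma_{gr}^{Z,2}(C_n) = n-1$ rather than $n$.} The upper bound $n-1$ for $\gamma_{gr}^{Z,2}$ is immediate from Corollary~\ref{cor:degreebound}, but for $\gamma_{gr}^{2}(C_n)$ the corollary also gives $\le n-1$, so in fact both upper bounds are free and the equality $\gamma_{gr}^{2} = \gamma_{gr}^{Z,2} = n-1$ follows by sandwiching with Proposition~\ref{prop:comparison}: $n - 1 \le \gamma_{gr}^{Z,2}(C_n) \le \gamma_{gr}^{2}(C_n) \le n-1$. So strictly speaking there is no obstacle beyond bookkeeping — but the step that requires the most care is verifying that the length-$(n-1)$ sequence is genuinely valid for \emph{every} $n \ge 3$ (the claim that each intermediate vertex has an un-saturated neighbor, and that the would-be $n$-th vertex has none), and confirming that the degree-based bounds of the previous section indeed apply with $\delta = 2$ and $k = 2$. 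I expect the write-up to consist of four short paragraphs, one per item, each pairing ``explicit sequence'' with ``cited upper bound.''
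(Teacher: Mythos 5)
Your proposal is correct and follows essentially the same route as the paper: upper bounds from Theorem~\ref{thm:degreeL} and Corollary~\ref{cor:degreebound}, lower bounds from traversing the cycle in cyclic order, the only (cosmetic) difference being that you obtain (1) from the $Z$-sequence and (2) from the $t$-sequence via Proposition~\ref{prop:comparison}, while the paper writes out each sequence separately. One harmless slip in your bookkeeping: at a generic step the footprinted vertex $i$ is (for $n\ge 4$) not a neighbor of $v_{i-1}=i-2$, so it has in fact appeared in \emph{no} prior open or closed neighborhood rather than one; the count only reaches one at the wrap-around steps involving vertex $0$, where it is still below $k=2$, so your sequences remain valid.
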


\begin{proof}

Select a vertex in $C_n$ and label it $v_1$. Label vertices clockwise from this point in increasing order.  

(1) First, $\gamma_{gr}^{2}(C_n) \leq n-1$ follows from Corollary~\ref{cor:degreebound} with $\delta = k = 2$. 
We now show that there exists a 2-Grundy sequence of length $n-1$. Let $S = \{v_1, v_2, ... , v_{n-1}\}$. To see that this is a 2-Grundy sequence, note that each vertex $v_i$ for $1 \leq i \leq n-2$ has a neighbor $v_{i+1}$ that has not been in $N[v_j]$ for $j < i$. This takes care of the first $n-2$ elements of $S$. Now, $v_1$ is the only neighbor of $v_n$ in $\{v_1, \ldots, v_{n-2}\}$, so $v_{n-1}$ can be added to $S$. Thus, $\gamma_{gr}^{2}(C_n)= n-1$.

(2) By Theorem~\ref{thm:degreeL}, $\gamma_{gr}^{L,2}(C_n) \leq n$. To see that $S = \{v_1, v_2, ..., v_n\}$ is a $2$-$L$-sequence, note that $v_j$ for $j \in [n-2]$ $1$-$L$-footprints $v_{j+1}$. Thus all $v_j$ for $j \in [n-2]$ can be included in $S$. The only vertex in $S$ containing $v_{n-2}$ in its open neighborhood is $v_{n-3}$, so $v_{n-1}$ $2$-$L$-footprints $v_{n-2}$ and can thus be included in $S$. Similarly, $v_n$ $2$-$L$-footprints $v_1$. Thus, $\gamma_{gr}^{L,2}(C_n)= n$.

(3)  By Corrollary~\ref{cor:degreebound}, $\gamma_{gr}^{Z,2}(C_n) \leq n-1$. The proof of $\gamma_{gr}^{Z,2}(C_n) \geq n-1$ follows from the proof of (1) since $N(v_1) \subset N[v_1]$.  Thus, $\gamma_{gr}^{Z,2}(C_n)= n-1$.

(4) Let $S = (v_1, v_2, ..., v_n)$. This is a $2$-$t$-sequence because for each $j \in [n-2]$, $v_{j+1} \notin N(v_i)$ for $i < j$. In order to include $v_{n-1}$ in $S$, note that out of all the vertices in $S$ before $v_{n-1}$, $v_{n-2}$ appears only in the open neighborhood of $v_{n-3}$. Thus, $v_{n-1}$ $2$-footprints $v_{n-2}$. Finally, note that the only vertex in $\{v_1, ... , v_{n-1}\}$ that contains $v_1$ in its open neighborhood is $v_2$, therefore $v_n$ can be included in $S$ since it $2$-footprints $v_1$. Thus, $\gamma_{gr}^{t,2}(C_n)= n$.
\end{proof}

\subsection{Complete graphs}\label{subsec:complete}
The complete graph on $n$ vertices, denoted $K_n$, is the graph in which every vertex is adjacent to all other vertices. Thus, each vertex has degree $n-1$. The following result holds for $K_n$.
\begin{thm}
 For $k \leq n-1$,
\begin{enumerate}
\renewcommand\labelenumi{\normalfont(\arabic{enumi})}
\item $\gamma_{gr}^{k}(K_n)= k$
\item $\gamma_{gr}^{L,k}(K_n)= k+1$
\item $\gamma_{gr}^{Z,k}(K_n)= k$
\item $\gamma_{gr}^{t,k}(K_n)= k+1$.
\end{enumerate}
\end{thm}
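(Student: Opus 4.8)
The plan rests on two structural facts about $K_n$: for every vertex $v$ we have $N[v]=V(K_n)$ and $N(v)=V(K_n)\setminus\{v\}$. The hypothesis $k\le n-1$ guarantees both that $\delta(K_n)=n-1\ge k$ (so that $k$-$Z$-Grundy domination is defined) and that $n\ge k+1$, so that sequences of $k+1$ distinct vertices exist; I will fix an arbitrary labelling $v_1,v_2,\dots$ of $V(K_n)$ throughout. For the upper bounds I would simply invoke Section~\ref{sec:degree}: Theorem~\ref{thm:degreeL} gives $\gamma_{gr}^{L,k}(K_n)\le n-\delta+k=k+1$, while Corollary~\ref{cor:degreebound} gives $\gamma_{gr}^{t,k}(K_n)\le k+1$ and $\gamma_{gr}^{Z,k}(K_n)\le\gamma_{gr}^{k}(K_n)\le n-\delta+k-1=k$. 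One can also see the last two directly: since $N[v_j]=V(K_n)$ for all $j$, whatever vertex footprints $v_i$ lies in all $i-1$ earlier closed neighborhoods, so the condition ``fewer than $k$'' forces $i\le k$, hence every $k$-sequence and every $k$-$Z$-sequence has length at most $k$.

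It then remains to exhibit sequences attaining these bounds. For $\gamma_{gr}^{Z,k}(K_n)\ge k$ and $\gamma_{gr}^{k}(K_n)\ge k$, take $S=(v_1,\dots,v_k)$: for the ordinary variant let $v_i$ footprint itself, and for the $Z$-variant let $v_i$ be footprinted by any vertex distinct from $v_i$; in either case the chosen vertex lies in the required neighborhood of $v_i$ and in exactly $i-1<k$ earlier closed neighborhoods. For $\gamma_{gr}^{L,k}(K_n)\ge k+1$, take $S=(v_1,\dots,v_{k+1})$: for $i\le k$ let $v_i$ footprint itself, noting $v_i\in N[v_i]$ and that $v_i$ lies in $N(v_j)$ for exactly the $i-1\le k-1$ indices $j<i$ (here it is essential that $v_i\notin N(v_i)$); and let $v_{k+1}$ be footprinted by $v_1\in N[v_{k+1}]$, which lies in $N(v_j)$ only for $j\in\{2,\dots,k\}$, that is, $k-1<k$ times. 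For $\gamma_{gr}^{t,k}(K_n)\ge k+1$, take $S=(v_1,\dots,v_{k+1})$ with $v_i$ footprinted by $v_{i+1}\in N(v_i)$ for $i\le k$ (which lies in $N(v_j)$ for the $i-1<k$ indices $j<i$) and $v_{k+1}$ footprinted by $v_1\in N(v_{k+1})$ (lying in $N(v_j)$ for $j\in\{2,\dots,k\}$). Alternatively, once $\gamma_{gr}^{Z,k}(K_n)=k$ and $\gamma_{gr}^{L,k}(K_n)=k+1$ are established, $\gamma_{gr}^{k}(K_n)=k$ is immediate from Proposition~\ref{prop:comparison}, so only the $L$- and $t$-constructions are strictly needed.

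I do not expect a genuine obstacle: the arguments are short counting checks. The only subtle point — and the only reason the $L$- and $t$-numbers exceed the others by one — is that a vertex does not belong to its own open neighborhood. This is precisely what makes the self-footprinting step legal for one extra term in the $L$-case and what makes the terminal vertices $v_{k+1}$ of the $L$- and $t$-sequences admissible; I would verify each footprint count against the threshold $k$ with particular care at $v_{k+1}$.
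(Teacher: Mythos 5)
Your proposal is correct and follows essentially the same route as the paper: upper bounds from Theorem~\ref{thm:degreeL} and Corollary~\ref{cor:degreebound} with $\delta=n-1$, and lower bounds by explicit sequences whose footprint counts use that $N[v]=V(K_n)$ while $v\notin N(v)$. Your version merely spells out the footprinting assignments (self-footprints for the $L$-case, $v_{i+1}$ and then $v_1$ for the $t$-case) more explicitly than the paper does.
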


\begin{proof}

(1) By Corollary~\ref{cor:degreebound}, $\gamma_{gr}^{k}(K_n)\leq n-\delta + k-1 = n-(n-1)+k-1 = k$. For the reverse inequality, note that for distinct $v_i, v_j \in V(K_n)$, $v_i \in N(v_j)$, and furthermore, $v_i \in N[v_i]$. Thus, in order for an arbitrary $v_i$ to be $k$-footprinted, there must be at least $k$ vertices in $S$. Therefore $\gamma_{gr}^{k}(K_n)= k$. 

(2) By Theorem~\ref{thm:degreeL}, $\gamma_{gr}^{L,k}(K_n) \leq k+1$ so it remains to show $\gamma_{gr}^{L,k}(K_n) \geq k+1$. Label the vertices of $K_n$ $v_1$ to $v_n$. For all $i \neq j$, $v_i \in N(v_j)$. Thus, any vertex in $S$ is in the open neighborhood of $|S|-1$ vertices of $S$ and every vertex not in $S$ is in the open neighborhood of all vertices of $S$. When $|S| = k+1$, all vertices have been included in the open neighborhood of at least $k$ vertices of $S$. Thus, $\gamma_{gr}^{L,k}(K_n) \geq k+1$. Since $\gamma_{gr}^{L,k}(K_n) \leq k+1$ and $\gamma_{gr}^{L,k}(K_n) \geq k+1$, $\gamma_{gr}^{L,k}(K_n)= k+1$.

(3) This proof is analogous to the proof of (1) with the appropriate open neighborhoods.

(4) This proof is analogous to the proof of (2) with the appropriate open neighborhoods.
\end{proof}

\subsection{Complete bipartite graphs}\label{subsec:completebipartite}
We now consider complete bipartite graphs. A complete bipartite graph is one in which the vertices are partitioned into two sets, $M$ and $N$. All $m$ vertices in $M$ are adjacent to all $n$ vertices in $N$, no two vertices in $M$ are adjacent and no two vertices in $N$ are adjacent. This graph is denoted $K_{m,n}$, and throughout this subsection, we assume $m \geq n$.
\begin{thm}
For $m,n \geq k$ and $m \geq n$,
\begin{enumerate}
\renewcommand\labelenumi{\normalfont(\arabic{enumi})}
\item $\gamma_{gr}^{k}(K_{m,n}) = m + k-1$
\item $\gamma_{gr}^{L,k}(K_{m,n}) = m + k$
\item $\gamma_{gr}^{Z,k}(K_{m,n})=
 \begin{cases}
                    2k & \; if \; m> k, n \geq k  \\
                    2k-1& \; if \; m,n = k
\end{cases}$
\item $\gamma_{gr}^{t,k}(K_{m,n})= 2k$.
\end{enumerate}
\end{thm}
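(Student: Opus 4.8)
The plan is to prove each of the four equalities by establishing matching upper and lower bounds, using the degree-based results from Section~\ref{sec:degree} for most of the upper bounds and explicit sequences for the lower bounds. Throughout, $K_{m,n}$ has minimum degree $\delta = n$ (since $m \geq n$), and we write $M$, $N$ for the two sides with $|M| = m$, $|N| = n$.

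For parts (1) and (2), I would first get the upper bounds: Corollary~\ref{cor:degreebound} gives $\gamma_{gr}^{k}(K_{m,n}) \leq n - \delta + k - 1 = m + k - 1$ and Theorem~\ref{thm:degreeL} gives $\gamma_{gr}^{L,k}(K_{m,n}) \leq n - \delta + k = m + k$. For the lower bounds I would build explicit sequences. The key structural fact is that a vertex $u \in M$ lies in $N(v)$ exactly for $v \in N$ (and in $N[u]$ for $u$ itself), and symmetrically for $u \in N$; so adding a vertex from $M$ to $S$ "charges" every vertex of $N$ once and charges that vertex of $M$ itself once (for the closed-neighborhood variants). For (1), I would add all $m$ vertices of $M$ first — each $v_i \in M$ is $k$-footprinted as long as some vertex of $N$ has been charged fewer than $k$ times, which holds while fewer than $k$ vertices of $M$ precede it only if... actually the right order is subtler: add vertices alternately or front-load one side. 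I would add all of $M$, then $k-1$ vertices of $N$: the $i$-th vertex of $M$ footprints itself (charged $0$ times before, since only $M$-vertices precede and they do not lie in each other's closed neighborhoods), giving $m$ valid steps; then each of the first $k-1$ vertices of $N$ footprints a not-yet-exhausted vertex of $M$ (each $M$-vertex has been charged once so far, by its own insertion, so it can absorb $k-1$ more charges from $N$-vertices, and with only $k-1$ such vertices no $M$-vertex is over-charged). That yields length $m + k - 1$. For (2), the same sequence extends by one more vertex from $N$: after $M$ and $k-1$ vertices of $N$, each vertex of $M$ has appeared in the open neighborhoods of exactly $k-1$ vertices of $S$, so a $k$-th vertex of $N$ can be added, $k$-$L$-footprinting any vertex of $M$; length $m + k$.

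For part (4), the upper bound is $\gamma_{gr}^{t,k}(K_{m,n}) \leq n - \delta + k = m + k$ from Corollary~\ref{cor:degreebound}, but that is not tight, so I need a direct argument that $\gamma_{gr}^{t,k} \leq 2k$. The point is that in the $t$-variant only open neighborhoods matter: a vertex of $M$ placed in $S$ charges each vertex of $N$ once and charges nothing in $M$; once $k$ vertices of $M$ have been placed, every vertex of $N$ is charged $k$ times and can never again serve as the witness $u_i$, and symmetrically for $N$. So at most $k$ vertices from each side can ever be added, giving $\gamma_{gr}^{t,k}(K_{m,n}) \leq 2k$; since $m, n \geq k$ this is achievable by adding $k$ vertices of $M$ then $k$ vertices of $N$ (the $i$-th vertex of $M$, $i \leq k$, has some vertex of $N$ charged $i - 1 < k$ times; the $j$-th vertex of $N$, $j \leq k$, has some vertex of $M$ charged $j - 1 < k$ times since only $N$-vertices charge $M$-vertices in the $t$-setting and at most $j-1$ precede). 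For part (3), the upper bound $\gamma_{gr}^{Z,k} \leq \gamma_{gr}^{k} = m + k - 1$ from Proposition~\ref{prop:comparison} is again not tight; instead I would run the same counting as in (4) but with the $Z$-rule (witness $u_i \in N(v_i)$, counted against closed neighborhoods of predecessors). A vertex of $M$ added to $S$ now charges every vertex of $N$ once \emph{and} itself once; so a vertex of $N$ can be a witness only while fewer than $k$ vertices of $M$ precede it — i.e. at most $k$ vertices of $M$ can be placed — and a vertex of $M$ used as a witness must have been charged fewer than $k$ times, which (counting its own insertion if it is in $S$) limits how many $N$-vertices can precede. Careful bookkeeping should give: if $m > k$ one can place $k$ vertices from each side for a total of $2k$, but if $m = n = k$ the self-charging costs one step and the maximum is $2k - 1$. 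The distinction between the two cases of (3) — whether an extra vertex of the larger side is available to absorb the self-charge — is the main obstacle, and I expect the $m = n = k$ upper bound $\gamma_{gr}^{Z,k}(K_{k,k}) \leq 2k-1$ to require the most care: one must show that in \emph{any} $Z$-sequence, once $k$ vertices of one side and $k-1$ of the other are placed, no legal move remains, by arguing every potential witness has already been charged $k$ times (each vertex of $M$ by the $k-1$ preceding $N$-vertices plus possibly itself, and each vertex of $N$ by the $k$ preceding $M$-vertices).
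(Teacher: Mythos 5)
Your proposal is correct, and its lower-bound constructions coincide with the paper's: all of $M$ followed by $k-1$ (resp.\ $k$) vertices of $N$ for (1) and (2), $k$ vertices from each side for (4), and the same ``spare vertex of $M$ absorbs the charges'' idea that separates the two cases of (3). Where you diverge is in how the upper bounds are sourced. The paper proves $\gamma_{gr}^{t,k}(K_{m,n})\le 2k$ by the same last-vertex argument you give (so (4) is essentially identical), but it gets $\gamma_{gr}^{Z,k}(K_{m,n})\le 2k$ for $m>k$ from Proposition~\ref{prop:comparison} together with (4), and --- the step you expected to need the most care --- $\gamma_{gr}^{Z,k}(K_{k,k})\le 2k-1$ in one line from Corollary~\ref{cor:degreebound}, since here $|V|-\delta+k-1=2k-k+k-1=2k-1$. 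Your direct counting is a valid, self-contained substitute: at most $k$ vertices per side can ever enter a $k$-$Z$- or $k$-$t$-sequence because the $(k+1)$-st vertex of a side finds every potential witness on the other side already in $k$ earlier (closed, resp.\ open) neighborhoods, and in $K_{k,k}$ a hypothetical sequence of length $2k$ uses every vertex, so the last vertex's witness is charged by itself plus the $k-1$ earlier vertices of the opposite side. Likewise your explicit length-$(m+k)$ sequence for (2) replaces the paper's appeal to Proposition~\ref{prop:comparison} applied to (1); and your sketched bookkeeping for (3) does check out (after $k$ vertices of $M$ and $j-1$ vertices of $N$, an unused $M$-vertex is charged $j-1$ times while a used one is charged $j$ times, yielding $2k$ when $m>k$ and $2k-1$ when $m=n=k$). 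One small notational caution: when quoting Theorem~\ref{thm:degreeL} and Corollary~\ref{cor:degreebound}, the $n$ appearing there is $|V(K_{m,n})|=m+n$, not the smaller side, so the bounds should be written $(m+n)-n+k-1=m+k-1$ and $(m+n)-n+k=m+k$.
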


\begin{proof}

(1) First, we shall show $\gamma_{gr}^{k}(K_{m,n}) \geq m + k-1$ and then show $\gamma_{gr}^{k}(K_{m,n}) \leq m + k-1$. For $v \in M$, $N[v] = N \cup v$, and for $u \in N$, $N[u] = M \cup u$. All $v \in M$ can be added to $S$ since for all $u \in M$, where $u \neq v$, $u \notin N[v]$. Now, each element of $N$ has been in the closed neighborhoods of $m \geq k$ elements of $S$. Once these are in $S$, $k-1$ elements of $N$ can be added to $S$ since each element of $M$ is in the closed neighborhood of each element of $N$ and each element of $M$ is in the closed neighborhood of exactly one element of $S$ prior to adding elements of $N$ to $S$. Thus, $\gamma_{gr}^{k}(K_{m,n}) \geq m + k-1$. 

By Corollary~\ref{cor:degreebound}, $\gamma_{gr}^{k}(K_{m,n}) \leq m+n-\delta + k-1 = m+n-n+k-1 = m+k-1$.

(2)  $\gamma_{gr}^{L,k}(K_{m,n}) \geq m + k$ is true by Proposition~\ref{prop:comparison} and (1) while $\gamma_{gr}^{L,k}(K_{m,n}) \leq m + k$ holds by Theorem~\ref{thm:degreeL}.


(3) First, we consider the case where $m = n = k$. $\gamma_{gr}^{Z,k}(K_{m,n}) \leq 2k-1$ by Corollary~\ref{cor:degreebound}, so it remains to show $\gamma_{gr}^{Z,k}(K_{m,n}) \geq 2k-1$.  For $u \in M$, $N(u) = N$ so all $k$ elements of $M$ can be added to $S$ since no element of $M$ can $Z$-footprint another element of $M$. At this point, each element of $M$ has appeared in the closed neighborhood of one element of $S$, and each element of $N$ has appeared in the open neighborhood of $k$ elements of $S$. Thus, when adding elements from $N$ to $S$, only $k-1$ vertices can be included since each element of $M$ is in the neighborhood of each element of $N$, so $\gamma_{gr}^{Z,k}(K_{m,n}) \geq 2k-1$ when $m=n=k$. Thus, $\gamma_{gr}^{Z,k}(K_{k,k}) = 2k-1$.

We now consider $ m> k, n \geq k$, and must show $\gamma_{gr}^{Z,k}(K_{m,n}) \geq 2k$ and $\gamma_{gr}^{Z,k}(K_{m,n}) \leq 2k$. For the former, we can add $k$ elements of $M$ to $S$, since each $Z$-footprints all elements of $N$, and each element of $N$ can be $Z$-footprinted at most $k$ times. Now, there exists at least one element of $M$, say $v$, that has not appeared in the closed neighborhood of any element in $S$. Thus, we can add $k$ elements of $N$ to $S$, since each element of $N$ $Z$-footprints $v$.  Thus, $\gamma_{gr}^{Z,k}(K_{m,n}) \geq 2k$. For the latter, by Proposition~\ref{prop:comparison}, $\gamma_{gr}^{Z,k}(K_{m,n})\leq \gamma_{gr}^{t,k}(K_{m,n})$, and $\gamma_{gr}^{t,k}(K_{m,n}) = 2k$ is proven next.

(4) We shall show $\gamma_{gr}^{t,k}(K_{m,n}) \leq 2k$ and $\gamma_{gr}^{t,k}(K_{m,n}) \geq 2k$. For the former, suppose for the sake of contradiction that $\gamma_{gr}^{t,k}(K_{m,n}) > 2k$. Then at least $2k+1$ vertices from either $M$ or $N$ is in $S$. Without loss of generality, suppose there are at least $2k+1$ vertices in $M$ included in $S$. Consider the last vertex of $M$ in $S$, say $v$. Now, $v$ cannot $k$-$t$-footprint itself by definition, so it must $k$-$t$-footprint some vertex in $N$. This is impossible, however, since every vertex in $N$ is in the open neighborhood of at least $k$ vertices that appear in $S$ before $v$. Therefore $\gamma_{gr}^{t,k}(K_{m,n}) \leq 2k$. To show the reverse inequality, note that the degree of all vertices in $M$ is $n$ and the degree of all vertices in $N$ is $m$. For each $v_i \in N$, $N(v_i) = M$. Thus, at most $k$ elements of $M$ can be added to $S$ and, by symmetry, at most $k$ elements of $N$ can be added to $S$. It is easy to see that exactly $k$ vertices from $M$ and $k$ from $N$ can be added to $S$, so $\gamma_{gr}^{t,k}(K_{m,n})\geq 2k$. Combining the two inequalities gives the result.

\end{proof}

\subsection{$d$-dimensional discrete hypercubes}\label{subsec:cube}
 The $d$-dimensional discrete hypercube, $Q_d$, has a vertex set that consists of all $0-1$ valued sequences of length $d$, $\epsilon =(\epsilon_i)_{i=1}^d$, where $\epsilon_i \in \{0,1\}$. Two sequences are joined by an edge if they differ in exactly one place. 
 
 In \cite{brevsar2017grundy}, it was shown that the $Z$-Grundy domination number of the $d$-dimensional cube, $Q_d$, is $\gamma_{gr}^Z(Q_d) = 2^{d-1}$. We shall show that the $k$-$L$-Grundy number of $Q_d$ is at least $ \lceil 2^d - 2^{d-(k+1)} \rceil$. 

The main result in this subsection relies on adding a specific pattern of vertices to $S$ in order to construct lower bounds for the $k$-L-Grundy domination number of $Q_d$. We shall define that pattern now. Consider $Q_d$ with $d \geq 3$. Partition $Q_d$ into $3$-dimensional discrete hypercubes, $\{Q_i\}$ such that the vertices of $Q_i$ consist of sequences satisfying the condition that the first $d-3$ positions of the sequences are identical, and the last three positions range from $000, 001, ... , 111$. Identify the cube $Q_1$ with sequences such that the first $d-3$ entries are $0$, $Q_2$ with all sequences where the first $d-3$ entries are $0$ except for the first entry, $Q_3$ with all sequences where the first $d-3$ entries are $0$ except for the second entry, etc, until $Q_{2^{d-3}}$ is identified with the sequence that has ones in the first $d-3$ positions. We now define \textit{Pattern A} $=\{*000,*011,*101,*110\}$ and \textit{Pattern B} $=\{*001,*010,*100,*111\}$, where  $* \in \{0,1\}^{d-3}$. Since any pair of vertices from the same pattern have Hamming distance greater than 1, they are not neighbors. Define the \textit{cube distance} between two cubes $Q_i$ and $Q_j$ as the Hamming distance between the vertex of $Q_i$ that has last three entries 000 and the vertex of $Q_j$ with last three entries 000. The \textit{standard pattern} for $Q_d$ consists of all Pattern A vertices of cubes that have even cube distance to $Q_1$ and all Pattern B vertices of cubes that have odd cube distance to $Q_1$. 

\begin{lemma}
The vertices in the standard pattern added to $S$ in any order form a $L$-sequence.
\end{lemma}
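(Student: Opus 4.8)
The plan is to show that every vertex $w$ in the standard pattern has a neighbor (or is itself) that lies in $N(v)$ for at most $0$ vertices $v$ already in $S$ — i.e. that each pattern vertex $1$-$L$-footprints some vertex — so that the order of insertion is irrelevant. Since a $1$-$L$-sequence is in particular a $k$-$L$-sequence for every $k \geq 1$, this suffices. First I would fix a pattern vertex $w$ in some cube $Q_i$ and consider its three neighbors inside $Q_i$ together with the $d-3$ neighbors obtained by flipping one of the first $d-3$ coordinates. The key observation is a parity/structure claim: within a single $3$-cube $Q_i$, Pattern A and Pattern B are the two sides of the bipartition of $Q_3$, so every neighbor inside $Q_i$ of a Pattern-A vertex is a Pattern-B vertex and vice versa; and a Pattern-A vertex of $Q_i$ is adjacent (across a flipped leading coordinate) only to the correspondingly-positioned vertex in an adjacent cube $Q_j$, which has the opposite cube-distance parity and hence contributes its opposite pattern to the standard pattern. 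Carrying this through, I would argue that if $w$ is a Pattern-A vertex of an even-distance cube, then none of the standard-pattern vertices is an open-neighbor of $w$ except possibly through the intra-cube structure — and I would single out one specific coordinate-flip neighbor $u$ of $w$ and show $u$ is in the open neighborhood of \emph{no} standard-pattern vertex at all, so $w$ can be inserted at any time and footprints $u$.

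The cleanest way to run the argument is probably to track, for a candidate footprinted vertex $u$, exactly which vertices have $u$ in their open neighborhood: these are precisely the $d$ neighbors of $u$ in $Q_d$. I would choose $u$ to be a vertex that is \emph{not} itself in the standard pattern (so that $u \in N[w]$ is used via $u \in N(w)$, $w \neq u$) and all of whose $Q_d$-neighbors fail to be in the standard pattern. Concretely: take $w$ of Pattern A in cube $Q_i$, and let $u$ be the neighbor of $w$ obtained by flipping the last coordinate; one checks $u$ has last three bits forming a Pattern-B-type triple, so $u \in$ (Pattern B of $Q_i$); whether $u$ lands in the standard pattern depends on the parity of $i$, and its $Q_d$-neighbors split as three vertices inside $Q_i$ (Pattern A of $Q_i$) plus $d-3$ vertices in neighboring cubes $Q_j$ (Pattern B of $Q_j$, since those have opposite parity). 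So I need to pick the flip direction for $u$ depending on the parity of $Q_i$ so that \emph{all} of $u$'s neighbors are of the pattern that is \emph{not} selected in their respective cubes. Pattern A of an odd cube is not selected, and Pattern B of an even cube is not selected — so for $w$ in an even cube of Pattern A, the intra-cube neighbors of $u$ are Pattern-A vertices of an even cube (selected!), which breaks this. Hence $u$ cannot be an intra-cube neighbor; instead I take $u$ to be a \emph{leading-coordinate} flip of $w$, landing in an adjacent odd cube $Q_j$ as a Pattern-A vertex (not selected), whose neighbors are: three Pattern-B vertices of the odd cube $Q_j$ (not selected, since odd cubes contribute Pattern A) and $d-3$ Pattern-A vertices of cubes adjacent to $Q_j$, all of even or odd distance — here is the one subtlety, since cubes adjacent to $Q_j$ include both even-distance cubes (which contribute Pattern A — bad) unless we are careful. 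This forces $d \geq 3$ to be used and a more careful bookkeeping of which leading coordinate to flip.

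The main obstacle, then, is exactly this bookkeeping: I expect the honest proof to set up coordinates so that $w = (\sigma, abc)$ with $\sigma \in \{0,1\}^{d-3}$ and $abc$ a Pattern-A triple, define $u$ by flipping a carefully chosen coordinate of $w$, and then verify by a short case check on the parity of $|\sigma|$ that every one of the $d$ neighbors of $u$ lies in the \emph{complement} of the standard pattern — using that Pattern A and Pattern B partition each $3$-cube into its two color classes, and that flipping a leading coordinate changes cube-distance parity by exactly one. Once that verification is done, $u$ is never in an open neighborhood of a standard-pattern vertex, so $w$ (for any $w$) footprints such a $u$ regardless of insertion order, and we conclude the standard pattern forms an $L$-sequence. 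I would also double-check the boundary behavior when $d = 3$ (a single cube, no leading coordinates), where the claim reduces to the fact that Pattern A alone is an $L$-sequence of $Q_3$ because each Pattern-A vertex footprints itself and no Pattern-A vertex is in the open neighborhood of another Pattern-A vertex.
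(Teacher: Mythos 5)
Your main plan has a genuine flaw, and the fix is the observation you only make in passing at the very end. Note first what the standard pattern actually is: a vertex $(\sigma,xyz)$ with $\sigma\in\{0,1\}^{d-3}$ lies in the standard pattern iff the parity of $|\sigma|$ (the cube distance to $Q_1$) matches the parity class of $xyz$ (Pattern A is the even-weight triples, Pattern B the odd-weight ones), so the standard pattern is exactly the even-total-weight side of the bipartition of $Q_d$. In particular it is an independent set, and the paper's entire proof is the one-line consequence: no two pattern vertices are adjacent, so each pattern vertex lies in the open neighborhood of no other pattern vertex and therefore $L$-footprints \emph{itself}, regardless of insertion order. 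You state precisely this for $d=3$ as a ``boundary case,'' but it is the whole argument for every $d$.

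Your main route, by contrast, cannot be repaired by more careful bookkeeping. You insist on choosing $u\neq w$ with $u$ outside the pattern and with \emph{no} neighbor of $u$ in the standard pattern. Since the standard pattern is the full even-parity class, every vertex outside it has odd weight, and hence \emph{all} $d$ of its neighbors have even weight and lie in the pattern; no such $u$ exists for any choice of coordinate flips, which is exactly the obstruction you ran into when tracking cubes adjacent to $Q_j$. The only vertices whose open neighborhoods avoid the pattern are the pattern vertices themselves, and the only one of those in $N[w]$ is $w$ (by independence), so your approach collapses back to self-footprinting. Drop the requirement $u\neq w$, observe the pattern is an independent set, and you have the paper's proof.
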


\begin{proof}
No vertices in the standard pattern are neighbors by construction, so they always footprint themselves.
\end{proof}

We now find a lower bound for $\gamma_{gr}^{L,k}(Q_d)$ for arbitrary $k$ and $d \geq 2$.

\begin{thm}\label{thm:Lcube}
For $d \geq 2$, and $1 \leq k \leq d$, $\gamma_{gr}^{L,k}(Q_d) \geq \lceil 2^d - 2^{d-(k+1)} \rceil$
\end{thm}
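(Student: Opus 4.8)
\medskip
The plan is to start the $k$-$L$-sequence with the entire standard pattern. By the preceding lemma this is already a valid $L$-sequence, hence a valid $k$-$L$-sequence, and a short check shows that the standard pattern is exactly the set of even-weight vertices of $Q_d$, so it has $2^{d-1}$ elements. It therefore suffices to extend this initial segment by appending $2^{d-1}-2^{d-k-1}$ odd-weight vertices. Once all even vertices lie in $S$, an odd vertex $w$ can be appended exactly when it has a neighbor $u$ that lies in the open neighborhood of fewer than $k$ of the vertices currently in $S$; since every neighbor of $w$ is even and hence already present, this amounts to saying that fewer than $k$ of the (odd) neighbors of $u$ have been appended so far.

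Next I would fix the set of $2^{d-k-1}$ odd vertices to discard. Partition $Q_d$ into $2^{d-k-1}$ subcubes of dimension $k+1$ by freezing the first $d-k-1$ coordinates, and from each such subcube discard a single ``corner'' odd vertex, chosen coherently from one block to the next: for instance $\bar w_q = q\alpha$ when the index $q$ has even weight and $\bar w_q = q\beta$ when $q$ has odd weight, where $\alpha,\beta \in \{0,1\}^{k+1}$ are fixed words at Hamming distance $1$ of the two relevant parities. The remaining odd vertices are then appended in order of \emph{decreasing} Hamming distance to the discarded set, and each appended vertex $w$ uses as its witness an even neighbor that is close to the corner $\bar w_q$ discarded from $w$'s own block. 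The coherence of the discarded corners is what keeps the contributions of other blocks to a witness's counter small, while the far-to-near order is what prevents an ``antipodal'' odd vertex from being starved at the very end of the process -- which is exactly what happens if one instead appends all odd vertices in an arbitrary order (already for $Q_4$ with $k=1$ that naive strategy stalls after only two odd vertices).

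The verification reduces to checking, for each appended $w$ and its chosen witness $u$, that at most $k-1$ odd neighbors of $u$ are in $S$ at that moment. When $k=1$ the corners can be chosen so that every witness has its entire odd-neighborhood inside the discarded set except for $w$ itself, so the relevant counter is always $0$ and the appended vertices may be taken in any order; this already gives $\gamma_{gr}^{L,1}(Q_d)\ge 2^d-2^{d-2}$. For $k\ge 2$ I would push the far-to-near argument through block by block. A convenient auxiliary fact is the splitting inequality $\gamma_{gr}^{L,k}(Q_d)\ge 2\,\gamma_{gr}^{L,k-1}(Q_{d-1})$: cut $Q_d$ along one coordinate into two copies of $Q_{d-1}$, run a longest $(k-1)$-$L$-sequence in the first copy and its mirror image in the second, and observe that the single cross-edge at each vertex raises any witness's count by at most one, so a $(k-1)$-$L$-sequence becomes a $k$-$L$-sequence. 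The small cases $d=k$ (where all $2^k$ vertices can be taken) and $d=k+1$ (all but one) are checked directly, as is $d=2$.

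The step I expect to be the main obstacle is precisely this bookkeeping. Because a witness's counter goes up with \emph{every} odd neighbor that gets appended -- not only with the ones it actually witnesses -- the choice of discarded corners and the exact append-order have to be tuned together so that no witness counter ever reaches $k$; coordinating the far-to-near ordering with the subcube partition uniformly in $k$ is the delicate part.
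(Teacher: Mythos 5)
Your setup is right: the standard pattern is indeed the set of even-weight vertices of $Q_d$ (Pattern~A occupies the blocks whose prefix has even weight and Pattern~B those whose prefix has odd weight, so every selected vertex has even total weight), it has $2^{d-1}$ elements and is independent, and the theorem reduces to appending $2^{d-1}-2^{d-k-1}$ odd-weight vertices. But the proposal stops exactly where the proof has to start. The entire content of the lower bound is the verification that some explicit discard set and append order keep every witness's counter below $k$, and you do not carry this out: you propose discarding one odd corner from each of $2^{d-k-1}$ subcubes of dimension $k+1$ and appending the remaining odd vertices ``far-to-near,'' but you give no argument that the chosen witnesses survive, and you yourself flag this bookkeeping as the expected main obstacle. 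Even your $k=1$ claim --- that the corners can be chosen so that each witness has \emph{all} of its odd neighbors other than $w$ discarded --- is a nontrivial design assertion ($2^{d-2}$ distinct even vertices would each need $d-1$ of their $d$ odd neighbors to lie in a discarded set of size only $2^{d-2}$), and it is not justified beyond small cases. The auxiliary inequality $\gamma_{gr}^{L,k}(Q_d)\ge 2\,\gamma_{gr}^{L,k-1}(Q_{d-1})$ is correct as you argue it, but it cannot close the gap: combined with the inductive hypothesis it yields $2\bigl(2^{d-1}-2^{(d-1)-k}\bigr)=2^d-2^{d-k}$, which is strictly weaker than the claimed $2^d-2^{d-k-1}$, and iterating it all the way down to $k=1$ gives only $2^d-2^{d-2}$.

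For contrast, the paper's construction makes the verification immediate: the discarded odd vertices are those whose first $k$ coordinates are all $1$, and the remaining odd vertices are added in $k$ phases, phase $j$ consisting of the odd vertices with prefix $1^{j-1}0$. Each such vertex footprints the even vertex obtained by flipping its $j$-th coordinate; that witness has prefix $1^{j}$, so the odd neighbors of the witness that were added earlier are exactly the $j-1$ vertices obtained by flipping one of its first $j-1$ coordinates, and its counter equals $j-1<k$ at the moment it is used. Some argument of this precision is what your write-up is missing.
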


\begin{proof}
 Note that $Q_2 = C_4$, $\gamma_{gr}^L(C_4) = 3 = 2^2-2^0$, and $\gamma_{gr}^{L,2}(C_4) = 4 = \lceil 2^2 - 2^{2-3} \rceil$. For the remainder of this proof, we assume $d \geq 3$.

Let us fix $k \leq d$. Throughout this section, we define $S^C = Q_d\setminus \widehat{S}$. Add vertices of $Q_d$ to $S$ in the standard pattern. Note all vertices in $S$ have not been footprinted and all vertices in $S^C$ have been footprinted $d$ times. Thus, any vertex that will be added to $S$ in the future must footprint a vertex in $Q_d \cap S$. 

Split $Q_d$ into two $(d-1)$-dimensional hypercubes, $Q$ and $Q'$, where the first entry of every vertex in $Q$ is 0 and the first entry of every vertex in $Q'$ is 1. Each vertex in $Q \cap S^C$ $1$-$L$-footprints exactly one vertex in $Q' \cap S$, since each vertex in $Q' \cap S$ has exactly one neighbor in $Q \cap S^C$ by construction. Thus, the vertices in $Q \cap S^C$ can be added to $S$. If $k=1$, every vertex has been 1-footprinted, so no more vertices can be added to $S$ and $|S| = \lceil 2^d - 2^{d-2} \rceil $.

If $k > 1$, more vertices can be added to $S$ since the vertices of $Q' \cap S$ have only been $1$-$L$-footprinted. To determine which vertices to add to $S$, split $Q'$ into two $(d-2)$-dimensional hypercubes, $Q''$ and $Q'''$, where the second entry of vertices in $Q''$ is 0 and the second entry of vertices on $Q'''$ is 1. Each vertex in $Q'' \cap S^C$ $2$-$L$-footprints exactly one vertex in $S \cap Q'''$, since each vertex in $S \cap Q'''$ has been $1$-$L$-footprinted by a vertex in $Q'$ and each vertex in $S \cap Q'''$ has exactly one neighbor in $Q'' \cap S^C$ by construction. Thus, each vertex in $Q'' \cap S^C$ can be added to $S$. If $k=2$, every vertex has been $2$-$L$-footprinted, so no more vertices can be added to $S$ and $|S| = \lceil 2^d - 2^{d-3} \rceil $.

This process continues until we split some $(d-(k-1))$-dimensional hypercube into two $(d-k)$-dimensional hypercubes (for $k < d$), $Q^a$ and $Q^b$, where the $k^{th}$ entry of vertices in $Q^a$ is 0 and the $k^{th}$ entry of vertices on $Q^b$ is 1. All vertices in $Q_a \cap S$ and $Q_b \cap S$ have been $(k-1)$-$L$-footprinted at this point, and each vertex in $Q^a \cap S^C$ $k$-$L$-footprints exactly one vertex in $S \cap Q^b$. Thus, each vertex in $Q^a \cap S^C$ can be added to $S$, each vertex in $Q_d$ has been $k$-$L$-footprinted, and $|S| = \lceil 2^d - 2^{d-(k+1)} \rceil$ when $k < d$. 

When $k=d$, there is only one vertex not in $S$. This vertex has $d$ neighbors, each with degree $d$. Thus, these neighbors have been $(d-1)$-footprinted, and the last vertex can be added to $S$ since it $d$-footprints them. The ceiling function is only needed in the case when $k=d$, since it is the only non-integer value of $2^d - 2^{d-(k+1)}$. Note that when $k=d$, $|S| = \lceil 2^d - 2^{-1} \rceil = 2^d$. In this case, since $\gamma_{gr}^{L,k}(Q_d)$ is bounded above by $2^d$, equality holds, so $\gamma_{gr}^{L,k}(Q_d) = 2^d$.

Thus, $\lceil 2^d - 2^{d-(k+1)} \rceil$ is a lower bound for $\gamma_{gr}^{L,k}(Q_d)$ since there exists a k-L-sequence of this length.


\end{proof}

Equality holds when $k \in \{d-1, d-2\}$ via the following lemma.

\begin{lemma}\label{dregular}
Let $G$ be a $d$-regular graph and let $k < d$. If $S$ is a maximal $L$-Grundy dominating sequence and $x$ is the last vertex in $S$, then $x$ must footprint another vertex $y \neq x$ for the $k^{th}$ time.
\end{lemma}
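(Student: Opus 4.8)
The plan is to argue by contradiction. Suppose $x=v_m$ does not footprint any $y\neq x$ for the $k$-th time; equivalently, no neighbor of $x$ lies in the open neighborhoods of exactly $k-1$ of the vertices $v_1,\dots,v_{m-1}$. Since $x$ belongs to $S$, it footprints some $u\in N[x]$, say for the $m'$-th time, where $m'\leq k$ (and we may take $u=x$ if $x$ footprints itself). From this I first extract a vertex $p$ lying in the open neighborhoods of strictly fewer than $k$ vertices of $\widehat S$. If $u=x$, take $p=x$: since $x\notin N(x)$, the number of vertices of $\widehat S$ having $p$ in their open neighborhood equals the corresponding count over $v_1,\dots,v_{m-1}$, namely $m'-1\leq k-1<k$. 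If $u\neq x$, then $u\in N(x)$ and the standing assumption excludes $m'=k$, so $m'\leq k-1$; taking $p=u$, the edge $xu$ contributes exactly one further incidence, so $p$ lies in the open neighborhoods of $m'\leq k-1<k$ vertices of $\widehat S$. This handles uniformly both the ``$x$ footprints only itself'' scenario and the ``$x$ footprints a neighbor, but too early'' one.

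Next I turn the bound on $p$ into a vertex extending $S$. Because $G$ is $d$-regular and $p$ has fewer than $k$ neighbors in $\widehat S$, it has at least $d-(k-1)=d-k+1\geq 2$ neighbors outside $\widehat S$, using $k<d$; pick any such $w\in N(p)\setminus\widehat S$. Then $p\in N[w]$ and $p$ lies in the open neighborhoods of fewer than $k$ vertices of $S$, so $(v_1,\dots,v_m,w)$ is a valid $k$-$L$-sequence strictly longer than $S$, contradicting maximality. Hence $x$ footprints some $y\neq x$ for the $k$-th time.

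The argument is short and I expect no serious obstacle; the delicate point is the bookkeeping in the first paragraph — that when the footprinted vertex $p$ is a neighbor of $x$ the edge $xp$ adds exactly one to $p$'s incidence count relative to $\widehat S$ (and nothing when $p=x$), and that the leftover case ``a neighbor is footprinted for the $k$-th time'' is exactly the conclusion we want rather than a case requiring a contradiction. The hypothesis $k<d$ is used only to guarantee that $p$ retains a neighbor outside $\widehat S$.
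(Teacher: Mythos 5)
Your proof is correct and follows essentially the same route as the paper's: negate the conclusion, locate a vertex lying in fewer than $k$ open neighborhoods of vertices of $\widehat{S}$, use $d$-regularity together with $k<d$ to find a neighbor of that vertex outside $\widehat{S}$, and append it to $S$, contradicting maximality. Your bookkeeping (taking $p=u$ when the witness $u\neq x$ is footprinted for the $m'$-th time with $m'<k$) is in fact slightly more complete than the paper's write-up, which explicitly treats only the case where $x$ itself lies in fewer than $k$ open neighborhoods.
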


\begin{proof}
Suppose by contradiction that $x$ does not footprint a vertex for the $k^{th}$ time, but rather is in $S$ only because it is contained in fewer than $k$ neighborhoods of vertices of $S$. Since $G$ is $d$-regular, not all of the neighbors of $x$ are in $S$. Let $y$ be one such neighbor. We can extend $S$ by adding $y$ to the end, since $x$ is in fewer than $k$ open neighborhoods of elements of $S$, contradicting the maximality of $S$.
\end{proof}

\begin{proposition}
For $d > 2$, $\gamma_{gr}^{L,d-1}(Q_d) = \lceil 2^d - 2^{d-(d-1+1)} \rceil = 2^d-1$ and $\gamma_{gr}^{L,d-2}(Q_d) = \lceil 2^d - 2^{d-(d-2+1)} \rceil = 2^d-2$.
\end{proposition}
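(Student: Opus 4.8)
The plan is to pair the lower bounds coming from Theorem~\ref{thm:Lcube} with matching upper bounds. For the lower half, I would simply specialize Theorem~\ref{thm:Lcube}: taking $k = d-1$ gives $\gamma_{gr}^{L,d-1}(Q_d) \geq \lceil 2^d - 2^{d-d} \rceil = 2^d - 1$, and taking $k = d-2$ gives $\gamma_{gr}^{L,d-2}(Q_d) \geq \lceil 2^d - 2^{d-(d-1)} \rceil = 2^d - 2$; here the ceiling is vacuous because for $k < d$ the quantity $2^{d-(k+1)}$ is a positive integer, and the requirement $k = d-2 \geq 1$ is exactly why we assume $d > 2$. So the whole task reduces to establishing $\gamma_{gr}^{L,d-1}(Q_d) \leq 2^d - 1$ and $\gamma_{gr}^{L,d-2}(Q_d) \leq 2^d - 2$.

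For the upper bounds, the shortest route is to note that $Q_d$ is $d$-regular on $2^d$ vertices, so $\delta(Q_d) = d$, and Theorem~\ref{thm:degreeL} immediately gives $\gamma_{gr}^{L,k}(Q_d) \leq 2^d - d + k$, which is $2^d - 1$ when $k = d-1$ and $2^d - 2$ when $k = d-2$; combined with the lower bounds this closes both cases. I would, however, also sketch the more hands-on derivation via Lemma~\ref{dregular}, since that is the lemma the section is built around. Take a maximal $k$-$L$-sequence $S = (v_1, \dots, v_m)$ of $Q_d$ with $k \in \{d-1, d-2\}$ (so $k < d$) and let $x = v_m$ be its last vertex. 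By Lemma~\ref{dregular}, $x$ footprints some $y \in N(x)$, $y \neq x$, for the $k$-th time, so $y$ lies in the open neighborhood of at most $k-1$ of the vertices $v_1, \dots, v_{m-1}$. Let $j = 2^d - m$ be the number of vertices outside $\widehat{S}$. Since $Q_d$ is $d$-regular, $y$ has $d$ neighbors, one of which is $x$; at most $j$ of the remaining $d-1$ neighbors of $y$ lie outside $\widehat{S}$, so at least $d-1-j$ of them lie in $\widehat{S}$, and because $x$ is the last element of $S$ each of these precedes $x$. Hence $y$ appears in the open neighborhood of at least $d-1-j$ earlier vertices, which forces $d - 1 - j \leq k - 1$, i.e. $j \geq d - k$. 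For $k = d - 1$ this says $j \geq 1$, so $m \leq 2^d - 1$; for $k = d-2$ it says $j \geq 2$, so $m \leq 2^d - 2$.

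I do not anticipate a genuine difficulty here, only a few small checks: that the hypotheses ($d$-regular, $k < d$) of Lemma~\ref{dregular} really hold for $Q_d$ with $k \in \{d-1, d-2\}$ and $d > 2$; the harmless edge case $j = 0$ in which $S$ would exhaust $V(Q_d)$ (ruled out by the same count, since then $x$ could footprint neither itself nor a neighbor, contradicting $x \in S$); and the arithmetic identifying $\lceil 2^d - 2^{d-(k+1)} \rceil$ with $2^d - 1$ and $2^d - 2$. The only potential pitfall is the bookkeeping of exactly how many of $y$'s $d$ neighbors are forced to occur before $x$, which is why it is cleanest to route the upper bound through Theorem~\ref{thm:degreeL} and keep Lemma~\ref{dregular} as the structural explanation.
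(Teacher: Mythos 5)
Your proposal is correct, and your primary route differs from the paper's in an instructive way. For the upper bounds the paper does not invoke Theorem~\ref{thm:degreeL} at all: it argues by contradiction separately for $k=d-1$ (assume all $2^d$ vertices lie in $S$) and $k=d-2$ (assume at most one vertex is missing), using Lemma~\ref{dregular} to force the last vertex $v_m$ to footprint some $y\neq v_m$ and then observing that every candidate $y$ already lies in at least $k$ open neighborhoods of earlier vertices. Your first route simply specializes the degree bound $\gamma_{gr}^{L,k}(Q_d)\le 2^d-d+k$ of Theorem~\ref{thm:degreeL} (which the paper only records afterwards as a standalone bound and does not use here) to $k\in\{d-1,d-2\}$; this is immediate and correct, since $Q_d$ has $2^d$ vertices and $\delta=d$, and it yields exactly $2^d-1$ and $2^d-2$. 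Your second, hands-on route is essentially the paper's argument, but run uniformly: the count $d-1-j\le k-1$, hence $j\ge d-k$, recovers $\gamma_{gr}^{L,k}(Q_d)\le 2^d-(d-k)$ for every $k<d$ in one stroke, rather than the paper's two separate ad hoc contradictions, and it makes transparent why the bound coincides with the degree bound. The only caveats are minor and you already flag them: Lemma~\ref{dregular} needs $k<d$ and $d$-regularity (both hold here), and "footprints for the $k$-th time" should just be read as "$y$ lies in at most $k-1$ earlier open neighborhoods," which is all your inequality uses.
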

\begin{proof}
We need to show $\gamma_{gr}^{L,d-1}(Q_d) \leq \lceil 2^d - 2^{d-(d-1+1)} \rceil = 2^d-1$ and $\gamma_{gr}^{L,d-2}(Q_d) \leq \lceil 2^d - 2^{d-(d-2+1)} \rceil = 2^d-2$, since the reverse directions of both equalities was proven in Theorem~\ref{thm:Lcube}. Consider $k=d-1$, and let $S$ be a maximal $(d-1)$-$L$-Grundy dominating sequence. There are at least $2^d - 2^{d-(k+1)} = 2^d - 2^{d-(d-1+1)} = 2^d-1 $ vertices in $S$, so at most one vertex is not in $S$. Suppose for the sake of contradiction that $\gamma_{gr}^{L,d-1}(Q_d) = 2^d$, so all vertices are in $S$. Let $v_m$ be the last vertex in $S$. By Proposition~\ref{dregular}, $v_m$ must footprint some $v_i$ for $i \neq m$ for the $(d-1)^{st}$ time. This is not possible since all vertices have degree $d$, so all have been $L$-footprinted at least $d-1$ times before $v_m$ is added to $S$, a contradiction.

When $k=d-2$, there are at least $2^d - 2^{d-(k+1)} = 2^d - 2^{d-(d-2+1)} = 2^d-2 $ in $S$, so at most two vertices are not in $S$. Suppose for the sake of contradiction that all $\gamma_{gr}^{L,d-1}(Q_d) \geq 2^d-1$. Then all vertices are in $S$, except possibly one that we shall call $x$. Let $v_m$ be the last vertex in $S$. By Proposition~\ref{dregular}, $v_m$ must footprint some $v_i$ for $i \neq m$ for the $(d-2)^{nd}$ time or must footprint $x$ for the $(d-2)^{nd}$ time. Since $x$ has degree $d$, at least $d-2$ neighbors of $x$ are in $S$ so $v_m$ cannot $(d-2)$-footprint $x$. Similarly, $v_m$ cannot footprint $v_i$ for the $(d-2)^{nd}$ time since all $v_i$ have degree $d$ and have at least $d-2$ neighbors in $S$ before $v_m$, a contradiction.

\end{proof}
A simple upper bound for $\gamma_{gr}^{L,k}(Q_d)$ follows from Theorem~\ref{thm:degreeL}.

\begin{thm}
$\gamma_{gr}^{L,k}(Q_d) \leq 2^d - d + k$.
\end{thm}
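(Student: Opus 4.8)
The plan is to derive this bound as an immediate corollary of Theorem~\ref{thm:degreeL}. Recall that the $d$-dimensional hypercube $Q_d$ has exactly $2^d$ vertices and is $d$-regular, so its minimum degree is $\delta = d$. Applying Theorem~\ref{thm:degreeL} with $n = 2^d$ and $\delta = d$ gives $\gamma_{gr}^{L,k}(Q_d) \leq n - \delta + k = 2^d - d + k$, which is exactly the claimed bound. So the proof is one line: instantiate the general degree-based upper bound.

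First I would state that $Q_d$ is $d$-regular on $2^d$ vertices, so $\delta(Q_d) = d$ and $|V(Q_d)| = 2^d$. Then I would invoke Theorem~\ref{thm:degreeL} directly to conclude $\gamma_{gr}^{L,k}(Q_d) \leq 2^d - d + k$. There are no cases to distinguish and no construction needed, since this is purely an upper bound and Theorem~\ref{thm:degreeL} already handles arbitrary graphs.

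There is essentially no obstacle here; the only thing worth a sentence is confirming that the hypotheses of Theorem~\ref{thm:degreeL} are met (it applies to any graph with $n$ vertices and minimum degree $\delta$, with no restriction on $k$, so nothing needs checking beyond reading off the two parameters of $Q_d$). If one wanted to add a remark, it would be to compare this with the lower bound $\lceil 2^d - 2^{d-(k+1)}\rceil$ from Theorem~\ref{thm:Lcube}: for small $k$ the lower bound is much larger than $2^d$ minus a constant while the upper bound $2^d - d + k$ is close to $2^d$, so the two bounds are far apart in general but coincide (both equal $2^d$) when $k = d$, and the preceding proposition shows they also coincide when $k \in \{d-1, d-2\}$.

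\begin{proof}
The hypercube $Q_d$ has $n = 2^d$ vertices and is $d$-regular, so its minimum degree is $\delta = d$. Applying Theorem~\ref{thm:degreeL} with these values yields $\gamma_{gr}^{L,k}(Q_d) \leq n - \delta + k = 2^d - d + k$.
\end{proof}
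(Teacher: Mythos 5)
Your proposal is correct and is exactly the paper's approach: the paper states this bound as an immediate consequence of Theorem~\ref{thm:degreeL}, using $n = 2^d$ and $\delta = d$ for the $d$-regular hypercube. Nothing further is needed.
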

After calculating $\gamma_{gr}^{L,k}(Q_d)$ for small values of $k$ and $d$, we conjecture that equality holds for all $k$:

\begin{conjecture}\label{cubeconjecture}
$\gamma_{gr}^{L,k}(Q_d) = \lceil 2^d - 2^{d-(k+1)} \rceil$.
\end{conjecture}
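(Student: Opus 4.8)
The lower bound $\gamma_{gr}^{L,k}(Q_d) \ge \lceil 2^d - 2^{d-(k+1)}\rceil$ is Theorem~\ref{thm:Lcube}, so only the matching upper bound is at issue. The trivial bound $\gamma_{gr}^{L,k}(Q_d) \le 2^d - d + k$ already gives the conjecture for $k \in \{d, d-1, d-2\}$, but its excess over the conjectured value is $2^{d-k-1} - (d-k)$, which is exponentially large once $k$ is small compared to $d$; hence any proof must use the expansion of $Q_d$ and not merely its $d$-regularity.

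The plan is an induction on $d$ with $k$ fixed. The cases $d \in \{k, k+1, k+2\}$ are already known (they are the $k=d$, $k=d-1$, $k=d-2$ cases treated above), and since $2\bigl(2^{d-1} - 2^{d-1-(k+1)}\bigr) = 2^d - 2^{d-(k+1)}$, the induction closes as soon as one proves the halving bound
\[
\gamma_{gr}^{L,k}(Q_d) \;\le\; 2\,\gamma_{gr}^{L,k}(Q_{d-1}) \qquad (d \ge k+3).
\]
To attack this, write $Q_d = A \cup B$ where $A$ and $B$ are the two subcubes obtained by fixing one coordinate $c$, joined by the perfect matching $v \leftrightarrow v^{B}$. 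Given a maximal $k$-$L$-sequence $S$ of $Q_d$, let $S_A$ and $S_B$ be its restrictions to $A$ and to $B$; clearly $|S| = |S_A| + |S_B|$, so it suffices to show that $S_A$ is a valid $k$-$L$-sequence of $A \cong Q_{d-1}$ (and symmetrically $S_B$). The only point that needs checking is a vertex $v_i \in A$ whose sole admissible footprint target at step $i$ is its matched partner $u_i = v_i^{B} \in B$; every other witness (a self-footprint, or a flip of $v_i$ within $A$) restricts correctly, since the number of neighbours a vertex has in a prefix only drops when one passes from $Q_d$ to $A$. So the heart of the matter is to choose the splitting coordinate $c$ (or to repair $S$ locally) so that no such ``cross-matching'' step occurs.

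The main obstacle is exactly this cross-matching issue. Labelling each step $i$ by the coordinate $c_i$ along which $v_i$ footprints (with $c_i = \bot$ for a self-footprint), a step is problematic for the split on $c$ precisely when $c_i = c$, and $\sum_c |\{i : c_i = c\}| \le |S| \le 2^d$, so pigeonhole only produces a coordinate with at most $2^d/d$ bad steps, not zero. Resolving this seems to require either (i) showing that a bad step can always be re-witnessed inside $A$ — i.e.\ that if $v_i \in A$ is admissible in $Q_d$ via $v_i^{B}$, then at that moment $v_i$ (or one of its $A$-neighbours) already has fewer than $k$ neighbours among the $A$-part of the prefix — or (ii) running the estimate symmetrically over all $d$ coordinate splittings and combining the resulting inequalities, or (iii) abandoning the recursion in favour of a direct isoperimetric argument: maximality of $S$ forces every vertex of $S^{C} = Q_d \setminus \widehat{S}$, and each of its neighbours, to have at least $k$ neighbours in $\widehat{S}$, and one would want Harper-type vertex isoperimetry in $Q_d$, together with the unextendability leverage used in Lemma~\ref{dregular} and in the $k \in \{d-1,d-2\}$ arguments (which already rules out a very small $S^{C}$), to force $|S^{C}| \ge 2^{d-(k+1)}$. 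I expect that the ``small $S^{C}$'' regime and the ``large $S^{C}$'' regime may ultimately need different methods, patched at a threshold value of $k$, which is presumably why the statement is only conjectured here.
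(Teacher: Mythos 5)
What you have written is not a proof but a research plan, and its central step is left open — which matters here because the statement itself is only a \emph{conjecture} in the paper (the authors prove the lower bound in Theorem~\ref{thm:Lcube} and the cases $k\in\{d-1,d-2\}$, and explicitly leave the general upper bound unresolved), so there is no paper proof for your argument to be measured against, and your text does not close the gap either. Concretely, everything hinges on the halving inequality $\gamma_{gr}^{L,k}(Q_d)\le 2\,\gamma_{gr}^{L,k}(Q_{d-1})$ for $d\ge k+3$, and your own analysis shows why the natural attempt fails: when you restrict a maximal $k$-$L$-sequence $S$ to a subcube $A$, a step $v_i\in A$ whose only admissible witness is its matched partner in $B$ need not remain legal in $A$, the pigeonhole count only gives a coordinate with $O(2^d/d)$ bad steps rather than zero, and none of your proposed repairs (re-witnessing inside $A$, averaging over all $d$ splittings, or a Harper-type isoperimetric argument on $S^{C}$) is actually carried out. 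Each of these is a genuinely open subproblem, not a routine verification; in particular, it is not clear that a cross-matching step can always be re-witnessed, since $v_i$ may already have $k$ prefix-neighbours inside $A$ while its $B$-partner does not, and the isoperimetric route would need a lower bound on $|S^{C}|$ of order $2^{d-(k+1)}$ that no lemma in the paper (including Lemma~\ref{dregular}, which only handles the last vertex of $S$) supplies.

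Two smaller remarks. First, your observation that the degree bound of Theorem~\ref{thm:degreeL} already yields the conjectured value for $k\in\{d,d-1,d-2\}$ is correct ($2^d-d+k$ equals $\lceil 2^d-2^{d-(k+1)}\rceil$ exactly in those cases), and is in fact a slightly cleaner derivation of those base cases than the paper's argument via Lemma~\ref{dregular}; your bookkeeping that $2\bigl(2^{d-1}-2^{d-1-(k+1)}\bigr)=2^d-2^{d-(k+1)}$ (an integer for $d\ge k+2$, so the ceiling causes no trouble) is also fine, so \emph{if} the halving bound were established the induction would indeed close. Second, your restriction argument correctly notes that any witness lying in $A$ (including a self-footprint) survives the restriction because prefix-neighbour counts only decrease; the defect is solely the cross-edge witnesses. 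So the proposal correctly isolates where the difficulty lives, but as it stands it proves nothing beyond what Theorem~\ref{thm:Lcube} and the degree bound already give, and Conjecture~\ref{cubeconjecture} remains open.
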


\subsection{Grids}\label{subsec:grids}
The grid graph is the graph Cartesian product of two paths, $P_n$ and $P_m$. Let $G = (V,E)$ and $G' = (V',E')$. The Cartesian product of $G$ and $G'$,  denoted $G \Box G' = (V_p, G_p)$, is a graph with vertex set $V_p = \{(u,v) : u \in V, v \in V'\}$ and two vertices $(u,v), (x,y) \in V_p$ are adjacent if $u=x$ and $vy \in E'$ or $v=y$ and $ux \in E$. Bre{\v{s}}ar et al. determined the Grundy domination number of $P_m \Box P_n$ in \cite{brevsar2016dominating}. For finite $m$ and $n$, the smallest degree of a vertex in $P_m \Box P_n$ is two, so we consider only cases when $k =2$. 

\begin{thm}\label{thm:grid}
Let $m,n$ be finite and $m \geq 2, n \geq 1$. Then $\gamma_{gr}^{2}(P_m \Box P_n) = mn-1$.
\end{thm}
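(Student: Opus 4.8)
The plan is to establish the two inequalities $\gamma_{gr}^{2}(P_m \Box P_n) \le mn-1$ and $\gamma_{gr}^{2}(P_m \Box P_n) \ge mn-1$ separately. For the upper bound, I would first handle the degenerate case $n=1$, where $P_m \Box P_1 = P_m$ is just a path; here the minimum degree is $1$, so Corollary~\ref{cor:degreebound} does not directly give $mn-1$, and one instead argues directly that the last vertex added cannot be $2$-footprinted once $m-1$ vertices are present (a short endpoint argument). For $n \ge 2$, the minimum degree of $P_m \Box P_n$ is $2$ (attained at the corners), so Corollary~\ref{cor:degreebound} with $\delta = k = 2$ immediately yields $\gamma_{gr}^{2}(P_m \Box P_n) \le mn - 2 + 2 - 1 = mn-1$.

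For the lower bound I would exhibit an explicit $2$-sequence of length $mn-1$, i.e.\ an ordering of all but one vertex of the grid. The natural candidate is a "boustrophedon" (snake) traversal: label the columns $1,\dots,n$ and within column $1$ list the vertices from row $1$ down to row $m$, then column $2$ from row $m$ up to row $1$, and so on, ending with all of column $n$; then delete the final vertex. The key point to verify is that each vertex $v_i$ in this sequence $2$-footprints some $u_i \in N[v_i]$, meaning $u_i$ lies in at most one closed neighborhood among $N[v_1],\dots,N[v_{i-1}]$. For every vertex except the last few, the "next" vertex $v_{i+1}$ along the snake works: it is adjacent to $v_i$ and, being later in the traversal, has at most one earlier neighbor (namely, possibly its grid-neighbor in the previous column). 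The endgame — the last column, and especially the last one or two vertices before the deleted vertex — needs a dedicated check: one shows the deleted corner vertex $w$ has exactly one neighbor among the vertices placed so far other than $v_{mn-1}$'s footprint target, so that $w$ (in the penultimate column's terminus) or the grid-neighbor of the deleted vertex is still available to be $2$-footprinted. This mirrors the argument in the proof of part (1) of the cycle theorem, where $v_1$ is the unique remaining neighbor of the omitted $v_n$.

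The main obstacle is the bookkeeping in the lower-bound construction: one must carefully track, for each vertex, how many of the previously-placed vertices contain a given neighbor in their closed neighborhoods, and confirm this count stays below $2$ for the chosen footprint target at every step — the snake ordering is designed precisely so that each vertex "sees" only its predecessor along the snake and at most one cross-column neighbor before it. I would organize this by a short case analysis on the position of $v_i$: interior of a column, top/bottom of a column (the turning points), and the terminal segment near the omitted vertex. The degenerate cases $n=1$ (path) and small $m$ should be verified separately, and I expect $P_m \Box P_1$, having $\delta = 1$, to require the only genuinely different argument. Once the snake sequence is shown valid, combining with the upper bound gives $\gamma_{gr}^{2}(P_m \Box P_n) = mn-1$.
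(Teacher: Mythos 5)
For $m,n\ge 2$ your plan is correct and is essentially the paper's own argument for the lower bound: the snake order is just one admissible way of listing the grid column by column, and the only facts you need are exactly the ones the paper uses, namely that each vertex of column $j<n$ $1$-footprints its neighbor in column $j+1$ (still uncovered at that moment), and that in the last column each vertex $2$-footprints the next one, which so far lies only in the closed neighborhood of its column-$(n-1)$ neighbor; the omitted corner has been covered only once, so the endgame is even simpler than your description suggests. Your upper bound for $n\ge 2$ via Corollary~\ref{cor:degreebound} with $\delta=k=2$ is legitimate and is a mild shortcut compared with the paper, which instead argues directly that the last vertex of a putative full sequence cannot be appended because the missing vertex and all of its neighbors are already covered twice; since the paper itself uses the corollary in exactly this way for cycles, this difference is cosmetic.

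The genuine gap is the deferred case $n=1$. There is no ``short endpoint argument'' to be found, because the inequality $\gamma_{gr}^{2}(P_m\Box P_1)\le m-1$ is false: $P_m\Box P_1=P_m$, and labelling the path $1,2,\dots,m$, the sequence $(1,2,\dots,m-2,m,m-1)$ is a valid $2$-sequence using \emph{all} $m$ vertices. Indeed, each $i\le m-2$ $1$-footprints $i+1$ (uncovered at that point), the vertex $m$ is completely uncovered when it is added, and $m-1$ then $2$-footprints $m$, which at that moment lies only in $N[m]$. Hence $\gamma_{gr}^{2}(P_m)=m$ for every $m\ge 2$, so your upper-bound step cannot be completed for $n=1$; the argument genuinely needs every vertex of the grid (and every neighbor of the omitted vertex) to have degree at least $2$, i.e.\ $m,n\ge 2$. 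To be fair, the paper's stated hypothesis $n\ge 1$ has the same defect and its proof also silently assumes $n\ge 2$ (it asserts that the missing vertex has two neighbors and that each such neighbor $u$ has a neighbor $w\ne v$), but in your write-up you should restrict to $m,n\ge 2$ rather than promise an endpoint argument that cannot exist.
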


\begin{proof}

First, we shall show that $\gamma_{gr}^{2}(P_m \Box P_n) < mn$, and then show $\gamma_{gr}^{2}(P_m \Box P_n) \geq mn-1$.

Suppose that there are $mn-1$ vertices in $S$ and let $v$ be the vertex of $P_m \Box P_n$ that is not in $S$. We shall show that $v$ and its neighbors have all been 2-footprinted, and thus $v$ cannot be added to $S$. First note that since $v$ is not in $S$, all of its neighbors must be in $S$, so $v$ has been 2-footprinted. Let $u$ be a neighbor of $v$. Now, since $m \geq 2$, $u$ has at least one neighbor besides $v$, say $w$. Then $w$ and $u$ footprint $u$, so it has been 2-footprinted. Since $u$ was arbitrary, $v$ cannot 2-footprint any of its neighbors, and hence $v$ cannot be included in $S$.

In order to show that  $\gamma_{gr}^{2}(P_m \Box P_n) \geq mn-1$, we will find a Grundy sequence of length $mn-1$. Orient the graph so that there are $m$ rows and $n$ columns. We refer to each vertex in the graph as $(a,b)$ where $a$ corresponds to the row in which the vertex is located and $b$ the column.  Add all $m$ vertices $(i,1)$ for $1 \leq i \leq m$ vertices to $S$ in any order. These vertices have a neighbor $(i,2)$ such that $(i,2) \notin N[(j,1)]$ when $i \neq j$. Thus, for fixed $i$, the vertex $(i,1)$ 1-footprints the vertex $(i,2)$. Similarly, all vertices $(i,2)$ through $(i,n-1)$ for $i$ ranging from $1$ to $m$ can be included in $S$, so long as all vertices in column $j$ are included in $S$ before any vertex from column $j+1$ is. Every vertex in $S$ up to this point has been footprinted by at least two other vertices of $S$, since each vertex in $S$ has at least two neighbors in $S$.  Now, all vertices $(i,n)$ have now been in the closed neighborhood of exactly one element of $S$ each, namely $(i-1,n)$, so more vertices can be included in $S$. The vertex $(1,n)$ can be added to $S$ since it 2-footprints $(2,n)$. Similarly, the $(2,n)$ can be added to $S$ since it footprints $(3,n)$, and so on, until only $(m,n)$ has not been added to $S$. This vertex cannot appear in $S$ since its only two neighbors have been previously 2-footprinted. Hence, $\gamma_{gr}^{2}(P_m \Box P_n) \geq mn-1$.

Since $\gamma_{gr}^{2}(P_m \Box P_n) < mn$ and $\gamma_{gr}^{2}(P_m \Box P_n) \geq mn-1$,$\gamma_{gr}^{2}(P_m \Box P_n) = mn-1$.
\end{proof}

The $2$-$L$-Grundy domination number follows from Theorem~\ref{thm:grid} and Proposition~\ref{prop:comparison}.
\begin{corollary}\label{lgrid}
Let $m \leq n$ and $n$ finite. Then $\gamma_{gr}^{L,2}(P_m \Box P_n)=mn$.
\end{corollary}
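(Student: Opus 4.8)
The plan is to trap $\gamma_{gr}^{L,2}(P_m \Box P_n)$ between $mn$ and $mn$. The upper bound is immediate: a $2$-$L$-sequence is by definition a sequence of pairwise distinct vertices, so its length is at most $|V(P_m \Box P_n)| = mn$, whence $\gamma_{gr}^{L,2}(P_m \Box P_n) \leq mn$. (When $m \geq 2$ one may instead quote Theorem~\ref{thm:degreeL}, since then $\delta = 2 = k$ and the bound reads $mn - \delta + k = mn$.)

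For the matching lower bound, apply Proposition~\ref{prop:comparison} with $k = 2$, which gives $\gamma_{gr}^{2}(P_m \Box P_n) \leq \gamma_{gr}^{L,2}(P_m \Box P_n) - 1$, i.e. $\gamma_{gr}^{L,2}(P_m \Box P_n) \geq \gamma_{gr}^{2}(P_m \Box P_n) + 1$. Now invoke Theorem~\ref{thm:grid}: since $P_m \Box P_n \cong P_n \Box P_m$ and $m \leq n$, at least one factor is a path on $\geq 2$ vertices (unless $m = n = 1$, in which case $P_m \Box P_n = K_1$ and $\gamma_{gr}^{L,2}(K_1) = 1 = mn$ is checked directly), so Theorem~\ref{thm:grid} applies and yields $\gamma_{gr}^{2}(P_m \Box P_n) = mn - 1$. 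Substituting, $\gamma_{gr}^{L,2}(P_m \Box P_n) \geq mn$, and combining with the upper bound gives equality.

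There is no substantive obstacle here — the real work sits in Theorem~\ref{thm:grid} and in Proposition~\ref{prop:comparison}, both of which we are entitled to use. The only point demanding a moment's care is reconciling the hypotheses: Theorem~\ref{thm:grid} is phrased for $P_m \Box P_n$ with $m \geq 2$ and $n \geq 1$, whereas the corollary assumes $m \leq n$; symmetry of the Cartesian product together with the trivial degenerate case $K_1$ bridges this gap cleanly.
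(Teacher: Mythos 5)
Your proposal is correct and matches the paper's intended argument exactly: the paper derives this corollary precisely from Theorem~\ref{thm:grid} together with the inequality $\gamma_{gr}^{2}(G)\leq\gamma_{gr}^{L,2}(G)-1$ of Proposition~\ref{prop:comparison}, with the trivial bound $\gamma_{gr}^{L,2}\leq mn$ closing the argument. Your extra care with the degenerate cases ($m=1$ via symmetry of the product, and $K_1$) is a reasonable tidying-up of hypotheses that the paper leaves implicit.
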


\section{Conclusion}\label{sec:conclusion}


First, we determined the $k$-Grundy domination numbers for different families of graphs, including $P_m \Box P_n$. In $P_m \Box P_n$, $\delta = 2$ so we only calculated $\gamma_{gr}^{2}(P_m \Box P_n)$. For infinite grids, however, the minimum degree is greater than two, so larger $k$ can be considered. Since infinite grids do not have a finite number of vertices, there is no reason to believe that $\gamma_{gr}^{k}$ is finite for these graphs. Let us define $\delta_{gr}^{k}(G)$ to be the density of vertices in a graph $G$ that form a $k$-Grundy dominating set of $G$. It would be interesting to determine the density of vertices that can be included in a $k$-Grundy dominating set for infinite rectangular grids.

\begin{problem}
Determine $\delta_{gr}^{k}(G)$ when $G$ is an infinite grid.
\end{problem}

We believe this number should be close to 1 when $k \leq 4$ due to Theorem~\ref{thm:grid}. It would also be of interest to determine $\gamma_{gr}^k(G)$ where $G$ is a finite regular lattice that is not $P_m \Box P_n$, for example, a triangular lattice.

\begin{problem}
Determine $\gamma_{gr}^{L,k}(G)$ when $G$ is a finite lattice that is not $P_m \Box P_n$.
\end{problem}

We then proved that $\gamma_\mathrm{gr}^{L,k}(G) \leq n - \delta(G) + k$ where $\delta(G) \geq k$ and $n = |V(G)|$. This result begins to characterize graphs whose $k$-$L$-Grundy domination numbers equals the number of vertices in the graph, but it may not be a sufficient characterization. Thus, the following problem remains: 

\begin{problem}
Characterize graphs where $\gamma_\mathrm{gr}^{L,k}(G) = n$. 
\end{problem}

The study of grids above motivates the problem of how to bound the $k$-$L$-Grundy domination number for graph products. For example, if the requirement that $\delta \geq k$ is dropped, it can be shown that $\gamma_{gr}^{L,2}(P_n) = n$. Corollary~\ref{lgrid} states that $\gamma_{gr}^{L,2}(P_m \Box P_n) = mn-1$, which is less than  $\gamma_{gr}^{L,2}(P_m)\gamma_{gr}^{L,2}(P_n) = mn$. We believe that dropping the requirement $\delta \geq k$ might explain why $\gamma_{gr}^{L,2}(P_m \Box P_n)$ and $\gamma_{gr}^{L,2}(P_m)\gamma_{gr}^{L,2}(P_n)$ are not equal. Thus, we ask

\begin{problem}
Does $\gamma_{gr}^{L,k}(G \Box H) = \gamma_{gr}^{L,k}(G)\gamma_{gr}^{L,k}(H)$ when  $\delta \geq k$?
\end{problem}

This question is not even known in the case $k=1$.




\bibliographystyle{abbrv}
\bibliography{main}

\end{document}